\documentclass[twoside]{report}
\usepackage{amssymb,lj-igpl}
\usepackage{color}
\usepackage{cmll}
\usepackage{stmaryrd}
\usepackage{MnSymbol}
\usepackage{subfig}
\usepackage{amsmath}  
%
%
\newtheorem{definition}{Definition}
\newtheorem{example}[definition]{Example}

\newtheorem{theorem}[definition]{Theorem}

\newtheorem{lemma}[definition]{Lemma}

\newtheorem{proposition}[definition]{Proposition}

\newtheorem{corollary}[definition]{Corollary}

\makeatletter
\renewcommand{\@begintheorem}[2]{ 
\trivlist\item[\hskip\labelsep{\bf #1\ #2}]}
\renewcommand{\@opargbegintheorem}[3]{\trivlist
\item[\hskip \labelsep{\bf #1\ #2\ (#3)}]}
\makeatother
\newtheorem{proof}{Proof}


\usepackage{doc}
\usepackage{proof}
\usepackage{amsfonts,mathrsfs,amssymb,amsmath}
\newcommand\blfootnote[1]{%
  \begingroup
  \renewcommand\thefootnote{}\footnote{#1}%
  \addtocounter{footnote}{-1}%
  \endgroup
}


%
 \usepackage{rotating}
\usepackage{hyperref}

%
\def\d{\Delta}
\newcommand{\GSD}{\Gamma\Rightarrow\Delta}
\def\g{\Gamma}

\def\s{\sigma}

\def\to{\supset}
\def\tto{\supset\subset}

\newcommand{\infrule}[1]{\scriptstyle\it{#1}}

\newcommand{\riota}{\rotatebox[origin=c]{180}{$\iota$}}

\def\<{\langle}
\def\>{\rangle}
\def\to{\supset}
\def\To{\Rightarrow}

\def\ex{\exists}
\def\fa{\forall}
\def\l{\lambda}
\def\F{\mathcal{F}}
\def\M{\mathcal{M}}
\def\MM{\mathscr{M}}
\def\W{\mathcal{W}}

\def\D{\mathcal{D}}
\def\C{\mathcal{C}}

\def\R{\mathcal{R}}

\def\B{\mathcal{B}}
\def\V{\mathcal{V}}
\def\L{\mathcal{L}}

\def\tr{\triangleright}

\def\RR{\mathscr{R}}

%
\Title{Labelled calculi for quantified modal logics with  definite descriptions}
\ShortAuthor{ E. Orlandelli}
\LongAuthor{
\author{Eugenio Orlandelli}
\address{Department of Philosophy and Communication Studies, University of Bologna, via Zamboni 38, Bologna, Italy.\\ Email: eugenio.orlandelli@unibo.it}
}

\Received{Day Month year.}
%
%

\begin{document}
\begin{paper}

\begin{abstract}
We introduce  labelled sequent calculi for quantified modal logics with definite descriptions. We prove that these calculi have the good structural properties of {\bf G3}-style calculi. In particular, all rules are height-preserving invertible, weakening and contraction are height-preserving admissible and cut is admissible. Finally, we show that each calculus gives a proof-theoretic characterization of validity in the corresponding class of models.\blfootnote{I am grateful to Sara Negri for valuable discussion of the ideas and results  presented in this paper.}  
\end{abstract}

\section{Introduction}

The proof-theoretic study of propositional modal logics is now a well-developed subject thanks to the introduction of generalizations of Gentzen-style sequent calculi. In particular, we have   internal calculi --  e.g., hypersequents \cite{CRW} and nested sequents \cite{B09} -- whose sequents are interpretable in the modal language, and we have external calculi -- e.g., display calculi \cite{CRW} and labelled sequent calculi \cite{NP11} -- whose sequents are not interpretable in the basic modal language.  Nevertheless,  with the only exception of labelled calculi \cite{CO16,NO19,NP11,OC18,CO18}, the proof-theoretic study of  QMLs has remained  rather underdeveloped, see \cite{R16} for some considerations on hypersequents and display calculi for QMLs. One interesting  problem that is still open is that of presenting a satisfactory approach to the  structural proof theory for quantified modal logics (QMLs) with definite descriptions: the only cut-free calculi are the Gentzen-style calculi for QMLs with definite description \emph{\`a la} Garson \cite{G13} that have been presented in \cite{I18}. 

Starting from our work in \cite{OC18}, we will introduce labelled calculi for the  QMLs with descriptions and  $\lambda$-abstraction that are studied by Fitting and Mendelsohn  \cite{FM98}. We will show that these calculi have  good structural properties -- all rules are height-preserving invertible, weakening and contraction are height-preserving admissible, and cut is (syntactically) admissible -- and characterize validity in the appropriate semantic classes. In so doing we  solve a problem  left open in \cite{I18} where we read:

\begin{quote}
[Fitting and Mendelsohn's one] is probably the most subtle theory of definite descriptions [...] As such it it certainly deserves attention but it is difficult to provide a suitable sequent formalization of it. [p. 388]
\end{quote}
The rest of this introduction gives a quick introduction to Fitting and Mendelsohn's QMLs with definite descriptions and  explains why labelled calculi  are the ideal formalism to study their structural proof theory.

As it is convincingly argued in \cite{FM98}, the need for non-rigid and non-denoting terms  originates from problems already touched upon in the classical works of Frege \cite{F92} and Russell \cite{R05}. First, as Frege noticed, even if both `the morning star' and `the evening star' denote Venus and even if the ancient knew that objects are self-identical, the Babylonians did not know that `the morning star is identical with the evening star'. Despite this, if we treat definite descriptions as genuine terms,  in standard QMLs we can prove that the Babylonians knew it because terms are rigid designators. Moreover, Russell showed that the sentence `The present king of France is  not bald' is ambiguous since it might either mean that the sentence `the present king of France is bald' is false, or that  the present king of France is  such that he is non-bald. Given that the expression `The present king of France' does not actually denote anyone, the first reading is, in fact, true and the second false. If we exclude non-denoting terms, we cannot account for these two readings of our sentence (unless we explain away the term expressing these definite descriptions).

As the two examples above show, if definite descriptions are taken as genuine terms,  we must extend  the language of  QMLs with non-rigid and non-denoting terms, but this extension is not trivial \cite{FM98}. The  problem, roughly, is that if $t$ is a non-rigid or  non-denoting term, the formal sentences $\Diamond Pt$  and $\neg Pt$  become ambiguous.  when  it  is evaluated in a possible world $w$ of some model, the sentence  $\Diamond Pt$  might either mean that  there is a world $u$ that is accessible from $w$ and such that the formal sentence $Pt$ is true therein, or it might mean that the object denoted by $t$ in $w$ satisfies the unary predicate $P$ is some world $v$ that is accessible from $w$. Analogously,  $\neg Pt$ might either mean that  it is false that (in $w$) there is one and only object that is denoted by $t$ and that  satisfies $P$, or it might mean that the one and only object denoted by $t$ (in $w$) does not satisfy $P$. For rigid and always denoting  terms the two readings are equivalent. For non-rigid and non-denoting  terms neither reading entails the other, and, therefore, we need some scoping mechanism to disambiguate the formulas $\Diamond Pt$  and $\neg Pt$. The solution adopted in \cite{FM98}  is that of extending the language with  the operator of predicate abstraction $\l$. The two readings of $\circ Pt$ (for $\circ\in\{\Diamond,\neg\}$) can thus be expressed, respectively, by the (semantically independent) formal sentences:
$$
\l x(\circ Px).t\qquad\text{and}\qquad \circ(\l xPx.t)
$$
All in all, Russell's \cite{R05} (and Smullyan's \cite{S48})  proposal of explaining away definite descriptions by means  of  quantification and identity originates from the need to have a scoping mechanism for the formal representation of non-rigid and non-denoting terms. By using the  machinery of  $\lambda$-abstraction we have a scoping mechanism for terms and, therefore, we don't need anymore to explain them away. One essential  feature of this approach is that non-rigid and non-denoting terms, such as definite descriptions, can occur in formulas only when they are applied by the operator $\l$ and not as one of the \emph{relata} of an atomic formula. This is needed because, otherwise, we would have problem in interpreting $\Diamond Pt$ (for $t$ non-rigid) and $\neg Pt$ (for $t$ non-denoting).

It is well-known that labelled calculi allows to give well-behaved sequent calculi for all first-order semantically definable propositional modal logic. The key idea is that of extending the language of sequent calculus in order to internalize relational semantics into the syntax: we add world labels (representing worlds) and relational atoms (representing the accessibility relation), and we replace modal formulas with labelled modal formulas (representing truth in a world). This allows to give well-behaved rules for the modalities (that are just like rules for restricted first-order  quantifiers). Moreover, thanks to the presence of relational atoms, it allows to use the method of axioms as rules  \cite{N03,NP98} to transform the first-order semantic conditions that define interesting modal logics into rules of the calculus.  This can be done directly for geometric semantic conditions (i.e., formulas of shape $\forall \vec{x}(A\to B)$, where neither $A$ nor $B$ contains $\forall$ and $\to$) and indirectly  for non-geometric ones (via the method of geometrisation of arbitrary first-order formulas \cite{DN15}).

As we will show, the strategy of internalizing the semantics works equally well for QMLs with definite descriptions. In particular, in order to internalize the semantics presented in \cite{FM98} we will need to add to the labelled language also denotation formulas of shape $D(t,x,w)$ that, when $t$ is a definite description, express the non-trivial fact that $t$ denotes one object in the world $w$. In this way we can easily define an external calculus for the QMLs with definite descriptions presented in \cite{FM98}.  Even if it is possible to define well-behaved internal calculi for some QMLs without definite descriptions \cite{R16}, e.g., by applying the embeddings given in \cite{G18,LR18}. We believe that labelled calculi provides the best tool for the logics we are considering because  it seems hard to define a calculus for them without using denotation formulas or some other extension of the language that cannot be interpreted in the language of modal logic. In a nutshell, the problem is that something like denotation formulas are needed to cope with definite descriptions and   the only way to interpret a denotation formula $D(t,x,w)$ in the object language (or to do without something like denotation formulas as  in \cite{I18,I19})  is via  an identity atom of shape $t=x$. But, if the modal language allows for formulas of shape $t=x$ with $t$ a definite description, then  $\lambda$-abstraction looses its role of  scoping mechanism and we run into problems with substitutivity of identicals, see \cite[Chapter 10.1]{FM98}, and with cut-elimination for identity atoms  \cite[Section 5]{I18}.

The paper is organized  as follows. Section \ref{noid} sketches the labelled calculi for QMLs presented in \cite{NP11}.  In particular,  the language and semantics of standard QMLs   are introduced in Section \ref{sect:syntax}, and  labelled calculi for these logics   are outlined in Section \ref{lsc}. In Section \ref{sec:definite}, we introduce QMLs with identity and definite descriptions.  We briefly compare    different approaches to descriptions (Section \ref{secintrdd}) and  we present  the syntax and the semantics of the QMLs with definite descriptions presented in \cite{FM98} (Section \ref{sec:descsem}).  Then, in Section  \ref{sec:desclab}, we introduce labelled calculi for these logics. Section \ref{sec:structural} shows that  these calculi have the good structural properties that are distinctive of {\bf G3}-style calculi, and Section  \ref{sec:characterization} shows that they are sound and complete with respect to the appropriate classes of quantified modal frames. We conclude in Section \ref{conclusion} by showing how the present approach 
can be extended to cover the quantified extensions of all first-order definable propositional modal logics and how it can simulate some other  approache to definite descriptions.

\section{Quantified Modal Logics}\label{noid}
In this section, we present QMLs based on a varying domain semantics  defined over a signature not containing  functions of any arity  nor the identity symbol, and we present labelled calculi for these logics. Apart from some minor  adjustment,  the semantics is as in \cite[Chap.~4.7]{FM98}, and the calculi are as in \cite[Chap.~12.1]{NP11}.  This section is needed to make the paper self-contained and it might be skipped by readers already familiar with QMLs and labelled calculi.
\subsection{Syntax and Semantics}
\label{sect:syntax}

Let $\mathcal{S}$ be a signature containing, for every $n\in\mathbb{N}$, an at most denumerable set $REL^\mathcal{S}$ of $n$-ary predicate letters $P_1^n,P_2^n,\dots$, and let $VAR$ be an infinite set of variables $x_1,x_2,\dots$. The language $\mathscr{L}$  is given by the grammar:
\begin{equation}\tag{$\mathcal{L}$}\label{L}
A::=P^nx_1,\dots,x_n\;|\; \bot\;|\;A\wedge A\;|\;A\lor A\;|\;A\to A\;|\;\forall xA\;|\;\exists xA\;|\;\Box A\;|\;\Diamond A
\end{equation}
where $P_n\in REL^\mathcal{S}$ and $x,x_1,\dots,x_n\in VAR$. We  use the following metavariables:
\begin{itemize}
\item $P,Q,R$ for predicate letters;
\item $x,y,z$ for variables;
\item $p,q,r$ for atomic formulas;
\item $A,B,C$ for formulas.
\end{itemize}
We follow the standard conventions for parentheses. The formulas $\top,\,\neg A$ and $ A\tto B$ are defined as expected. The notions of \emph{free} and \emph{bound occurrences} of a variable in a formula are the usual ones. Given a formula $A$, we use $A[y/x]$ to denote the formula  obtained by replacing each free occurrence of $x$ in $A$ with an occurrence of $y$, provided that $y$ is free for $x$ in $A$ -- i.e.,  no  new occurrence of $y$ is bound by a quantifier. 

A \emph{model} (over the signature $\mathcal{S}$) is a tuple:
$$
\mathcal{M}=\<\W,\R,\D,\V\>
 $$
where
\begin{itemize}
\item $\W\neq\varnothing$ is a nonempty set of (possible) \emph{worlds} (to be denoted by $w,v,u\dots$);
\item $\R\subseteq \W\times \W$ is a binary \emph{accessibility relation} between worlds;
\item $\D: \W\longrightarrow 2^D$ is a function mapping each world to a possibly empty set of objects $D_w$ (its \emph{domain}),  where $D_\W=\bigcup_{w\in \W} D_w$ is nonempty and disjoint from $\W$;
\item $\V:\mathcal{S}\times \W\longrightarrow 2^{(D_\W)^n}$ is a \emph{valuation} function mapping, at each world $w$, each $n$-ary predicate $P(\in\mathcal{S})$ to a subset of $(D_\W)^n$.
\end{itemize}

A \emph{frame} $\F$ is a triple $\<\W,\R, \D\>$ (i.e. it is a model without valuation),  and a model $\M$ is  \emph{based on a frame} $\F$ if $\M=\<\F,\V\>$. We will say that a frame $\<\W,\R,\D\>$ has:
\begin{itemize}
\item \emph{Increasing domain} if $\forall w,v\in\W$,\, $w \R v$ implies $D_w\subseteq D_v$;
\item \emph{decreasing domain} if $\forall w,v\in\W$,\, $w \R v$ implies $D_w\supseteq D_v$;
\item \emph{Constant  domain} if  $\forall w,v\in\W$,\, $w \R v$ implies $D_w= D_v$.
\end{itemize}

Given a model $\MM=\<\W,\R,\D,\V\>$, an \emph{assignment} (over $\MM$) is a function \mbox{$\sigma: VAR\longrightarrow D_\W$} mapping each variable $x$ to an element of the union of the domains of the model. Moreover, for $o\in D$, $\sigma^{x\tr o}$ denotes the assignment behaving like $\sigma$ save for  $x$ that is mapped to the object $o$. 


\begin{definition}[Satisfaction]\label{sat}
Given a model $\MM$, an assignment $\sigma$ over it, and a world $w$ of that model, we define the notion of \emph{satisfaction} of an $\mathscr{L}$-formula $A$ as follows:

\begin{tabular}{lll}\noalign{\smallskip}
$\sigma\models^\MM_w Px_1,\dots x_n$&iff& $\<\sigma(x_1),\dots,\sigma(x_n)\>\in \V(P,w)$\\\noalign{\smallskip}
$\sigma\nmodels^\MM_w \bot$\\\noalign{\smallskip}
$\sigma\models^\MM_w B\wedge C$&iff& $\sigma\models^\MM_w B$ and $\sigma\models^\MM_w C$\\\noalign{\smallskip}
$\sigma\models^\MM_w B\lor C$&iff& $\sigma\models^\MM_w B$ or $\sigma\models^\MM_w C$\\\noalign{\smallskip}
$\sigma\models^\MM_w B\to C$&iff& $\sigma\nmodels^\MM_w B$ or $\sigma\models^\MM_w C$\\\noalign{\smallskip}
$\sigma\models^\MM_w \forall xB$&iff& for all $o\in D_w,\, \sigma^{x\tr o}\models^\MM_w B$\\\noalign{\smallskip}
$\sigma\models^\MM_w \exists xB$&iff& for some $o\in D_w,\, \sigma^{x\tr o}\models^\MM_w B$\\\noalign{\smallskip}
$\sigma\models^\MM_w \Box B$&iff& for all $v\in\W,\; w\R v$ implies $\sigma\models^\MM_v B$\\\noalign{\smallskip}
$\sigma\models^\MM_w \Diamond B$&iff& for some $v\in\W,\; w\R v$ and $\sigma\models^\MM_v B$\\ 
\end{tabular}
\end{definition}

The notions of \emph{truth in a world  $w$ of a  model}  ($\models_w^\MM A$), \emph{truth in a model} ($\models^\MM A$), and \emph{validity in a (class of) frames}  ($\F(\in\C) \models A$) are defined as usual. 

 As it is well known,  some notable formulas are valid in classes of frames  defined by properties of the accessibility relation and/or of the domains. In particular, Table \ref{prop} presents some well-known (geometric) propositional correspondence results as well as correspondence results for increasing, decreasing and constant domain frames. By an \emph{$\mathscr{L}$-logic}  $\mathbf{Q.L}$ we mean the set of all $\mathscr{L}$-formulas that are valid in a class of frames. We use standard names for  $\mathscr{L}$-logics -- e.g., {\bf Q.K} stands for the set of $\mathscr{L}$-formulas valid in the class of all frames, and {\bf Q.S4${}\oplus$CBF/BF/UI} stands for the set of $\mathscr{L}$-formulas valid in the class of all  reflexive and transitive frames with increasing/decreasing/constant domain. We say  that \emph{$\MM$ is a model for {\bf Q.L}} whenever $\MM$ is based on a frame in the class that defines {\bf Q.L}.

\begin{table}\caption{Modal axioms and corresponding semantic properties}\label{prop}
\begin{center}\begin{tabular}{ll}
\hline\hline\noalign{\smallskip}
$T:= \Box A\to A$&reflexivity:=$ \forall w\in \W(w\R w)$\\\noalign{\smallskip}
$D:=\Box A\to\Diamond A$&seriality:=$\forall w\in\W\exists u\in\W(w\R u)$\\\noalign{\smallskip}
$4:=\Box A\to\Box\Box A$&transitivity:=$\forall w,v,u\in\W(w\R v\wedge v\R u\to w\R u)$\\\noalign{\smallskip}
$5:=\Diamond A\to\Box\Diamond A$&Euclideaness:=$\forall w,v,u\in\W(w\R v\wedge w\R u\to v\R u)$\\\noalign{\smallskip}
$CBF:=\Box\forall xA\to\forall x\Box A$&increasing domain
\\\noalign{\smallskip}
$BF:=\forall x\Box A\to\Box\forall xA\quad$&decreasing domain
\\\noalign{\smallskip}
$UI:=\forall xA\to A[y/x]$&constant domain
\\\noalign{\smallskip}\hline\hline
\end{tabular}\end{center}
\end{table}
\subsection{Labelled Sequent Calculi}
\label{lsc}
Labelled sequent calculi for $\mathscr{L}$-logics  have been introduced in \cite[Chapter~12.1]{NP11}.  These calculi are based on  extending the modal  language in order to internalize the semantics of QMLs. First of all, we introduce a  set $LAB$ of fresh variables, called \emph{labels}. Labels will be denoted by $w,v,u,\dots$ and will be used to represent worlds. Then, we extend the set of formulas by adding atomic formulas of shape $x\in w$ --  expressing that (the object assigned to) $x$ is in the domain of quantification of (the world represented by) $w$ -- and of shape $w\RR v$ --  expressing that  $v$ is accessible from   $w$. Lastly, we replace each $\mathscr{L}$-formula $A$ with the labelled formula $w:A$ --  expressing that $A$ holds at  $w$. A \emph{labelled sequent} is an expression:
$$
{}\GSD
$$
where $\g$ is a multiset  composed of labelled formulas and of  atomic formulas of shape $x\in w$ or $w\RR v$, and where $\d$ is a multiset  of labelled formulas. Given a formula $E$ of this extended language, $E[w/v]$ is the formula obtained by substituting each occurrence of $v$ in $E$ with an occurrence of $w$. Substitution  of variables is extended to formulas of the extended language as expected, and both kinds of substitution are extended to sequents by applying them componentwise.

The rules of the calculus {\bf G3Q.K}, for the minimal $\mathscr{L}$-logic {\bf Q.K}, are given in Table \ref{rulesQK}. For each logic {\bf Q.L} extending {\bf Q.K}, the calculus {\bf G3Q.L} is obtained by extending {\bf G3Q.K} with the non-logical rules of Table \ref{nonlogicalQK} that express proof-theoretically the geometric semantic properties  which define {\bf Q.L} (cf. Table \ref{prop}). Whenever a calculus contains rule $Eucl$, it contains also all its contracted instances $Eucl^c$ (see \cite[p. 100]{NP11}). Observe that $CBF$ ($BF$) is not derivable in calculi where rule $Incr$ ($Decr$) is not primitive nor admissible (given Proposition \ref{properties}.8, this can be checked semantically).


A \emph{{\bf G3Q.L}-derivation} of a sequent ${}\GSD$ is a tree of sequents, whose leaves are initial sequents, whose root is ${}\GSD$, and which grows according to the rules of {\bf G3Q.L}. As usual, we consider only derivations of \emph{pure sequents} -- i.e., sequents where no variable has both free and bound occurrences. The \emph{height} of a {\bf G3Q.L}-derivation  is the number of nodes of its longest branch. We say that ${}\GSD$ is {\bf G3Q.L}-derivable (with height $n$),  and we write $\mathbf{G3Q.L}\vdash^{(n)}{}\GSD$, if there is a {\bf G3Q.L}-derivation (of height at most $n$) of ${}\GSD$ or of an alphabetic variant of $\GSD$. A rule is said to be \emph{(height-preserving) admissible} in {\bf G3Q.L}, if, whenever its premisses are {\bf G3Q.L}-derivable (with height at most $n$), also its conclusion is {\bf G3Q.L}-derivable (with height at most $n$). In each rule depicted in Tables \ref{rulesQK} and \ref{nonlogicalQK}, $\g$ and $\d$ are called \emph{contexts}, the formulas occurring in the conclusion are called \emph{principal}, and the formulas occurring  in the premisses only are called \emph{active}.

\begin{table}\caption{Rules of {\bf G3Q.K}}\label{rulesQK}
\begin{center}\scalebox{1.00000}{\begin{tabular}{ccc}
\hline\hline\noalign{\medskip}
{\bf initial sequents:}& \qquad&${}w:p,\GSD,w:p$, with $p$ atomic\\\noalign{\medskip}
{\bf logical rules:}&\\\noalign{\medskip}
\infer[\infrule L\bot]{{}w:\bot,\GSD}{{}\GSD}\\\noalign{\medskip}
\infer[\infrule L\wedge]{{} w:A\wedge B,\GSD}{{}w:A,w:B,\GSD}\qquad&&
\infer[\infrule R\wedge]{{}\GSD,w:A\wedge B}{{}\GSD,w:A&{}\GSD,w:B}\\\noalign{\medskip}
\infer[\infrule L\lor]{{} w:A\lor B,\GSD}{{}w:A,\GSD&w:B,\GSD}\qquad&&
\infer[\infrule R\lor]{{}\GSD,w:A\lor B}{{}\GSD,w:A,w:B}\\\noalign{\medskip}
\infer[\infrule L\to]{{} w:A\to B,\GSD}{{}\GSD,w:A&w:B,\GSD}\qquad&&
\infer[\infrule R\to]{{}\GSD,w:A\to B}{{}w:A,\GSD,w:B}\\\noalign{\medskip}
\infer[\infrule L\forall]{y\in w,{}w:\forall xA,\GSD}{w:A[y/x],y\in w,w:\forall xA,\GSD}&&
\infer[\infrule{ R\forall, \text{ $z$ fresh}}]{{}\GSD,w:\forall xA}{z\in w,{}\GSD,w:A[z/x]}\\\noalign{\medskip}
\infer[\infrule{ L\exists, \text{ $z$ fresh}}]{w:\exists xA,\GSD}{z\in w,w:A[y/x],\GSD}&&
\infer[\infrule{ R\exists}]{y\in w,\GSD,w:\exists x A}{y\in w,{}\GSD,w:A[y/x]}\\\noalign{\medskip}
\infer[\infrule L\Box]{w\RR v,{}w:\Box A,\GSD}{v:A,w\RR v,w:\Box A,\GSD}&&
\infer[\infrule{ R\Box,\text{ $u$ fresh}}]{{}\GSD,w:\Box A}{w\RR u,{}\GSD,u:A}
\\\noalign{\medskip}
\infer[\infrule{ L\Diamond,\text{ $u$ fresh}}]{w:\Diamond A,\GSD}{w\RR u,{}u:A,\GSD}&&
\infer[\infrule R\Diamond]{w\RR v,\GSD,w:\Diamond A}{w\RR v,{}\GSD,w:\Diamond A, v:A}
\\\noalign{\medskip}\hline\hline
\end{tabular}}\end{center}
\end{table}
\begin{table}\caption{Non-logical rules}\label{nonlogicalQK}
\begin{tabular}{ccc}
\hline\hline\noalign{\medskip}
\infer[\infrule Ref_\W]{{}\GSD}{w\RR w,{}\GSD}\qquad&
\infer[\infrule Eucl]{w\RR v,w\RR u,{}\GSD,}{v\RR u,w\RR v,w\RR u,{}\GSD}&
\infer[\infrule Eucl^c]{w\RR v,{}\GSD,}{v\RR v,w\RR v,{}\GSD}
\\\noalign{\smallskip}
\infer[\infrule{ Ser,\, u \text{ fresh} }]{{}\GSD,}{w\RR u,{}\GSD}&
\infer[\infrule Trans]{w\RR v,v\RR u,{} \GSD}{w\RR u,w\RR v,v\RR u,{}\GSD}\\\noalign{\smallskip}
\end{tabular}

\begin{tabular}{ccc}
\infer[\infrule Incr]{x\in w,w\RR v,{}\GSD}{x\in v,x\in w,w\RR v,{}\GSD}&
\infer[\infrule Decr]{x\in v,w\RR v,{}\GSD}{x\in w,x\in v,w\RR v,{}\GSD}&
\infer[\infrule Cons]{{}\GSD}{x\in w,{}\GSD}
\\\noalign{\medskip}\hline\hline
\end{tabular}
\end{table}

The following proposition presents the main meta-theoretical properties of {\bf G3Q.L}. The  proofs can be found in \cite[Chap.~12.1]{NP11}.
\begin{proposition}[Properties of {\bf G3Q.L}]\label{properties}\
\begin{enumerate}
\item Sequents of shape 
 ${}w:A,\GSD,w:A$ (with $A$ non-atomic) are {\bf G3Q.L}-derivable.
\item The rule of $\alpha$-conversion is height-preserving admissible: if {\bf G3Q.L} $\vdash^n {}\GSD$, then {\bf G3Q.L} $\vdash^n{}\g'\To\d'$, where $\g'$ ($\d'$) is obtained from $\g$ ($\d$) by renaming bound variables.
\item The following rules of substitution are height-preserving admissible in {\bf G3Q.L}:
$$
\infer[\infrule{ [y/x]}]{\g[y/x]\To\d[y/x]}{{}\GSD}\qquad
\infer[\infrule{ [w/v]}]{\g[w/v]\To\d[w/v]}{{}\GSD}
$$
where $y$ is free for $x$ in each formula occurring in $\g,\d$ for rule $[y/x]$.
\item The   following rules of weakening are height-preserving admissible in {\bf G3Q.L}:
$$
\infer[\infrule LW]{\g',\GSD}{\GSD}\qquad
\infer[\infrule RW]{\GSD,\d'}{\GSD}
$$
\item Each rule of {\bf G3Q.L} is height-preserving invertible.
\item  The following rules of contraction are height-preserving admissible in {\bf G3Q.L}:
$$
\infer[\infrule LC]{{}\g',\GSD}{{}\g',\g',\GSD}\qquad
\infer[\infrule RC]{{}\GSD,\d'}{{}\GSD,\d',\d'}
$$
\item The following rule of Cut is admissible in {\bf G3Q.L}:
$$
\infer[\infrule Cut]{\g',\GSD,\d'}{{}\GSD,w:A&w:A,\g'\To\d'}
$$
\item  {\bf G3Q.L} is sound and complete with respect to  {\bf Q.L}.
\end{enumerate}
\end{proposition}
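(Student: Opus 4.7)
{ (Proof Plan)}
The plan is to prove the eight items in the natural dependency order, since later items rely on earlier ones. Throughout, the main workhorse is induction on the height $n$ of the given derivation, combined with case analyses on the last rule applied.

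First, for (1), I would proceed by induction on the complexity of $A$. The base case is the initial sequent scheme. For each connective I apply the corresponding left and right rule (top-down) and close each branch by the induction hypothesis at lower complexity; for quantifiers I use a fresh variable from $LAB$ or $VAR$ as required by the eigenvariable rules. Then (2) is an easy height-preserving induction: renaming bound variables touches only the active formulas of rules $R\forall,L\exists,L\forall,R\exists$, and the freshness conditions on eigenvariables can be preserved by first relabelling the bound variable in the premiss. For (3), height-preserving substitution is proved simultaneously for both kinds of substitution by induction on $n$; the only nontrivial cases are rules with eigenvariable conditions ($R\forall,L\exists,R\Box,L\Diamond,Ser$), where if the eigenvariable conflicts with the substitution I first rename it using (2) and the induction hypothesis before applying the rule below.

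Next, (4) weakening is a routine height-preserving induction, again renaming eigenvariables using (3) whenever $\g'$ or $\d'$ contains the fresh variable. Invertibility (5) splits into two groups: for rules whose active formulas are a subset of the principal ones together with context (e.g.\ $L\wedge,R\vee,R\to,L\Box,R\Diamond$) invertibility is immediate from admissible weakening; for the remaining rules, in particular the quantifier and modal rules with eigenvariables, I argue by induction on $n$, using (3) to instantiate the eigenvariable with the actual witness when the principal formula is not introduced last. Contraction (6) is then proved by induction on $n$ using (5): when the contracted formula is not principal in the last rule, the induction hypothesis applies to the premiss(es); when it is principal, inversion reduces the situation to contracting on strictly smaller/non-principal instances, closed by the induction hypothesis. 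The non-logical rules of Table \ref{nonlogicalQK} are all geometric, hence closed under contraction on the relational/domain atoms.

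The main obstacle, as usual, is (7) cut admissibility. I would run the standard Negri--Plato double induction with primary measure the weight of the cut formula $w{:}A$ and secondary measure the sum of the heights of the two premiss derivations. The cases split according to whether the cut formula is (a) not principal in the left premiss, (b) not principal in the right premiss, or (c) principal in both. In (a) and (b) the cut is permuted upward, invoking (3) to avoid clashes with eigenvariables introduced above the cut; the non-logical rules of Table \ref{nonlogicalQK} are geometric and hence permit such permutations without increasing cut-weight. In (c) the cut is replaced by one or two cuts on immediate subformulas; for the quantifier cases I use (3) to instantiate the eigenvariable side by the witness from the other premiss, and for the modal cases I likewise substitute the fresh label introduced by $R\Box$ or $L\Diamond$ by the accessible label coming from $L\Box$ or $R\Diamond$. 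Admissible contraction (6) is used to absorb duplicated contexts created by these reductions. Finally (8): soundness is a direct induction on derivation height, interpreting labels as worlds and checking that every rule of Tables \ref{rulesQK} and \ref{nonlogicalQK} preserves validity in every frame of the class defining $\mathbf{Q.L}$. Completeness is obtained by root-first proof search: either one obtains a proof, or a fair saturated failed branch yields a counter-model whose worlds are the labels, whose accessibility is given by the relational atoms, whose domains are determined by the $x\in w$ atoms, and whose valuation is read off from the atomic labelled formulas in the antecedents; the non-logical rules guarantee that this frame lies in the intended class.
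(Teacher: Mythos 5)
Your plan follows the standard Negri--Plato methodology, which is exactly what the paper does here: the paper gives no proof of its own for this proposition and simply defers to \cite[Chap.~12.1]{NP11}, whose arguments your sketch faithfully reproduces (height-preserving inductions for items 1--6, double induction on cut-weight and cut-height for item 7, root-first proof search and countermodel extraction for item 8). One local slip in item 5: invertibility of $L\wedge$, $R\lor$ and $R\to$ is \emph{not} immediate from admissible weakening, since their premisses do not contain the conclusion as a sub-multiset; the rules that are Kleene-invertible via weakening are those repeating the principal formula in the premiss ($L\forall$, $R\exists$, $L\Box$, $R\Diamond$ and the non-logical rules), while the propositional rules without repetition need the same induction on height that you reserve for the eigenvariable rules, so the technique is present in your plan but misfiled.
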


\section{Quantified modal logics with definite descriptions}\label{sec:definite}
\subsection{Preliminary discussion}\label{secintrdd}
Before introducing the formal machinery used by Fitting and Mendelshon \cite{FM98} to deal with definite descriptions, we take a minute to outline some of the main semantic  approaches to definite descriptions, see \cite{I18,I19} for more details. Given a formula with one free variable $A(x)$,  let us consider the description:
\begin{equation}\label{description}
\textnormal{the $x$ such that }A(x)
\end{equation}

First of all, we can simply deny that  (\ref{description}) has to be represented by a genuine term of the formal language. In this case, following Russell \cite{R05}, we can  explain it away   by means of of  quantification and identity as follows:
$$
\exists x(A(x)\wedge\forall y(A(y)\to y=x))
$$
This is a very simple solution in that we simply get rid of the problem of giving a semantics for improper definite descriptions. 

If, instead, following Frege \cite{F92} we maintain that  (\ref{description}) has to be represented by a genuine term of the formal language, we extend the language with terms of  shape:
$$
\riota xA(x)
$$
and we have to give a satisfactory semantics  for descriptions. If  a description is proper, then it denotes the one and only existing object  that satisfies it (in a given world). The  problem is what to do when a description is improper  because  either it is true of no object --e.g., $\riota x(x\neq x)$ -- or it is true of more than one object --e.g., $\riota x(x=x)$.
If we don't want to extend the language then we cannot allow for non-denoting terms. Thus, we have to accept that improper descriptions denote some object.  In a constant domain setting -- i.e., when we use the quantification theory of classical logic -- we can follow     Montague and Kalish \cite{KM57} and  assume that all improper descriptions denote a chosen object. If, instead, we are using  a varying domain semantics -- i.e.,  the quantification theory of positive free logic -- then we can follow Garson \cite{G13} and assume that each improper description denotes a non-existing object (without thereby assuming  it is the same one for all improper descriptions). 
Nevertheless, these two solutions are not satisfactory in that it is more natural to maintain that an improper description simply fails to denote. Moreover, like  Russell's approach,  they do not disentangle designation from existence. 

The addition of $\lambda$-abstraction to the language allows Fitting and Mendelsohn \cite{FM98} to avoid these shortcomings: proper descriptions denote the one and only object that satisfies them (be it an existing object or not) and improper descriptions simply fails to denote. Failure of denotation will not be a problem because definite descriptions do not occur as \emph{relata} of atomic formulas, but only as terms applied via $\lambda$. Hence we can simply impose that if $t$ is an improper description then $\l x A.t$ is false: this implies  that $\neg (\l x A.t)$ is true -- i.e., that  the sentence $At$ is false -- without thereby implying  that $\l x \neg A.t$ is true -- i.e., that  the negation of $A$ is true of the object denoted by $t$. 
\subsection{Syntax and semantics}\label{sec:descsem}
Let us consider the same signature $\mathcal{S}$ of Section \ref{sect:syntax} (functions of any arity are omitted for simplicity).
The sets of terms and  formulas of the language $\mathscr{L}^\l$ are defined simultaneously as follows:
$$
t::= x\;|\;\riota xA\phantom{\;|\;\;\\;\;; \;\;\;\;x_1=x_2\;|\; \bot\;|\;A\wedge A\;|\;A\lor A\;|\;A\to A\;|\;\forall xA\;|\;\exists xA\;|\;\Box A\;|\;\Diamond A\;|\; \l x_1A.t}
$$
$$
A::=P^nx_1,\dots,x_n\;|\; x_1=x_2\;|\; \bot\;|\;A\wedge A\;|\;A\lor A\;|\;A\to A\;|\;\forall xA\;|\;\exists xA\;|\;\Box A\;|\;\Diamond A\;|\; \l xA.t
$$
where $P_n\in REL^\mathcal{S}$ 
and $x,x_1,\dots,x_n\in VAR$. Observe that definite descriptions can occur in a formula only as terms applied by the operator $\l$.   We continue to use the conventions and notions introduced in Section \ref{sect:syntax} with the following additions: 
\begin{itemize}
\item
in $\l xA.t$ all occurrences of $x$ (save for the displayed $t$ in case $t\equiv x$) are\mbox{ bound by $\l x$;}
\item $t,r,s$ range over terms.
\end{itemize}

Frames, models and assignments are defined as in Section \ref{sect:syntax}.
Because of definite descriptions, we have to define the notions of denotation and  satisfaction together.
\begin{definition}[Denotation and satisfaction] Given a model $\MM$, an assignment $\sigma$ over it, and a world $w$ of that model, we define the notions of \emph{denotation} of a term $t$ and  \emph{satisfaction} of an $\mathscr{L}$-formula $A$ as follows:
\begin{itemize}
\item Denotation of a term $t$:

\begin{tabular}{lll}\noalign{\smallskip}
$\V_w^\s(x)=\s(x)$\\\noalign{\smallskip}
$\V_w^\s(\riota xA)=o$&iff&$o$ is the one and only member of $D_\W$ such that $\s^{x\tr o}\models_w^\M A$\\\noalign{\smallskip}
\end{tabular}
\item Satisfaction is defined by extending Definition \ref{sat} with the following clauses:

\begin{tabular}{lll}\noalign{\smallskip}
$\s\models_w^\M x=y$&iff &$\s(x)=\s(y)$\\\noalign{\smallskip}
$\s\models_w^\M \l x A.t$&iff&$\V_w^\s(t)$ is defined and $\s^{x\tr\V_w^\s(t)}\models_w^\M A$\\\noalign{\smallskip}
\end{tabular}
\end{itemize}
\end{definition}
Truth and validity are defined as in Section \ref{sect:syntax}. All correspondence results of Section \ref{noid} carries over to QMLs with definite descriptions.

Finally, we show the generality of this approach to definite descriptions by showing how it can simulate the other (non-eliminative)  approaches  considered in Section \ref{sec:definite}. 

\begin{proposition}\label{simulate}\
\begin{enumerate}
\item A Montague and Kalish-style \cite{KM57} description can be  defined as:
$$
\rotatebox[origin=c]{180}{$\iota$}_mxA\;\equiv \riota x (\exists !y(A[y/x])\veebar Ux)
$$
where $\exists !$ is the unique existential quantifier, $\veebar$ is exclusive disjunction and $U$ is a constant  monadic predicate axiomatized by $\forall y(U(y)\to\exists z(y=z))$ and \mbox{$\forall z(U(y)\wedge U(z)\to y=z)$} (our language does not contain contants).
\item A Garson-style \cite{G13} description can be defined as:
$$
\rotatebox[origin=c]{180}{$\iota$}_gxA\;\equiv \riota x (\exists !y(A[y/x])\veebar U_{\riota xA}(x) )
$$
where $U_{\riota x A}$ is a constant  predicate, that is  parametric on  $\riota xA$ (modulo alphabetic variants), such that $\forall z(U_{\riota xA}(y)\wedge z=y\to\bot)$ and  $U_{\riota xA }(z)\wedge U_{\riota xA} (y)\to z=y$.
\end{enumerate}
\end{proposition}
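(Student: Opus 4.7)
The plan is to verify each equivalence by direct semantic computation: for each proposed definition on the right-hand side, compute its Fitting--Mendelsohn denotation in an arbitrary model and show that it coincides with the denotation prescribed by the approach being simulated. The natural strategy is a case split on whether the description is \emph{proper} at the relevant world under the current assignment, i.e., whether there is exactly one $o\in D_\W$ with $\s^{x\tr o}\models^\M_w A$.

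For part 1, fix $\M$, $\s$, and $w$. Since $x$ does not occur free in $\exists!y(A[y/x])$, the guard $\exists!y(A[y/x])$ is a sentence and splits the analysis cleanly into two cases. In the proper case, this guard is true, and one must verify that the compound formula inside $\riota$ singles out exactly the unique satisfier $o_A$ of $A$. In the improper case, the guard fails, so the compound reduces (after unfolding $\veebar$) to a condition equivalent to $Ux$; by the uniqueness axiom on $U$, this has a single witness $u$, and hence $\riota$ returns $u$, which is precisely the Montague--Kalish chosen object. Both axioms on $U$ are essential here: the uniqueness axiom makes $\riota$ well-defined, while $\forall y(U(y)\to\exists z(y=z))$ ensures the chosen object is an existing inhabitant of $D_w$, as Montague and Kalish require.

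For part 2, the argument is structurally identical, with $U_{\riota xA}$ replacing $U$. The key difference lies in the first Garson axiom $\forall z(U_{\riota xA}(y)\wedge z=y\to\bot)$, which forces the witness of $U_{\riota xA}$ to lie \emph{outside} $D_w$; this recovers Garson's stipulation that improper descriptions denote non-existing objects. The parametricity of $U_{\riota xA}$ in (the alphabetic-variant equivalence class of) $\riota xA$ then guarantees that distinct improper descriptions may be witnessed by distinct non-existing objects, as Garson's framework demands, while the second axiom supplies the uniqueness required for $\riota$ to pin one down.

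The main obstacle I anticipate is the proper-case sub-proof: checking that the formula inside $\riota$ has exactly one satisfier when $A$ is proper, so that $\V_w^\s$ of the right-hand side actually yields $o_A$. This involves expanding $\veebar$ into $(P\wedge\neg Q)\vee(\neg P\wedge Q)$ and using the interaction of the constant guard $\exists!y(A[y/x])$ with the witness of $U$ (resp.\ $U_{\riota xA}$) in a delicate way; without this care, $\riota$ could easily fail to be defined in that case. Once this verification is settled, the remainder of the argument is a routine unfolding of the satisfaction and denotation clauses introduced in Section~\ref{sec:descsem}.
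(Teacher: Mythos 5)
The paper states Proposition~\ref{simulate} without any proof, so there is no official argument to compare your route against; your overall strategy --- unfold the denotation clause for $\riota$ from Section~\ref{sec:descsem} and split on whether $A$ is proper at $w$ under $\s$ --- is certainly the intended one. The problem is that the step you yourself single out as the main obstacle is not merely delicate: it fails for the formula as displayed. Since every free occurrence of $x$ in $A$ is replaced by $y$, the guard $\exists!y(A[y/x])$ contains no free occurrence of $x$ at all; hence in the proper case the matrix $\exists!y(A[y/x])\veebar Ux$ is, as a condition on $x$, equivalent to $\neg Ux$, whose satisfier set is $D_\W$ minus the (at most one-element) extension of $U$. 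That set equals $\{o_A\}$ only in degenerate models, so $\riota x(\exists!y(A[y/x])\veebar Ux)$ is in general \emph{undefined} in the proper case rather than denoting $o_A$. No expansion of $\veebar$ into $(P\wedge\neg Q)\vee(\neg P\wedge Q)$ repairs this: the first disjunct must itself pin $x$ down (something like $A\wedge\forall z(A[z/x]\to z=x)$), and even then a literal exclusive disjunction with $Ux$ would typically admit two satisfiers, namely $o_A$ and the $U$-witness, so the definition has to be read as a guarded ``if proper then $\dots$ else $\dots$'' disjunction rather than as a truth-functional xor. A complete proof must first repair, or make explicit, this reading; your proposal defers exactly the verification that cannot go through as stated.

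There are two further gaps. In the improper case you assert that the uniqueness axiom yields ``a single witness $u$''; the stated axioms give \emph{at most} one $U$-object but not its existence, so without an additional nonemptiness postulate the simulated description fails to denote precisely where Montague and Kalish require it to denote the chosen object. And for part~2 (and for part~1 outside strictly constant domains) note that $\exists!y$ is evaluated over $D_w$ while the denotation clause for $\riota$ quantifies over $D_\W$, so ``the guard holds at $w$'' and ``$A$ is proper'' can come apart; your case split silently identifies the two.
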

%
%
\section{Labelled calculi}\label{sec:desclab}
In order to introduce labelled sequent calculi for QMLs with definite descriptions, we  extend the language of labelled calculi with \emph{denotation formulas} of shape $D(t,x,w)$, which will be used to express that  the variable  $x$ denotes the object  denoted in $w$  by the term  $t$. From now on, a sequent $\GSD$  is an expression where $\g$ is a multiset of labelled  $\mathcal{L}^\l$-formulas and of formulas of shape $D(t,x,w)$, $x\in w$ or $w\RR v$; and  $\d$ is a multiset  of labelled $\mathcal{L}^\l$-formulas and of denotation formulas only. The following non-standard definition of weight will be essential in Sections \ref{sec:structural} and \ref{sec:characterization}.

\begin{definition}[Weight of terms and formulas] \
\begin{itemize}
\item The weight of a term $t$ is $0$ if $t$ is a variable  and, if $t\equiv \riota xA$, it is equal to the weight of $w:\riota xA$;
\item The weight of a labelled $\mathcal{L}^\l$-formula $w:A$ is defined as the number of operators that differs from $\bot$ (and  $=$) occurring in $A$ plus the weight of each occurrence of a term in $A$;
\item The weight of a formula $D(t,x,w)$ is 
equal to the weight of the term $t$;
\item the weight of formulas of shape $x\in w$ and  $w\RR v$ is $0$.
\end{itemize}
\end{definition}

The rules of the calculus {\bf G3Q$\l$.L} are the rules of {\bf G3Q.L}, see Tables \ref{rulesQK} and \ref{nonlogicalQK}, plus the initial sequents and  rules given in Table \ref{rulesQKld}. Observe that the rules for identity contain the labelled version of the non-logical rules  first introduced in \cite{NP98}. When $w:y=x$ holds, by $Repl$ we can replace $x$ with $y$ in any atomic formula that, so to say, talks about $w$. Rule $RigVar$ implies that if $x$ and $y$ denote the same object in some world, they do so in each world. Thus, variables  behave as rigid designators and   labels could be omitted from identities.  We choose to keep them in order to have a more uniform notation.

 \begin{table}\caption{Additional rules for $\mathbf{G3\l.L}$}\label{rulesQKld}

\begin{center}\scalebox{1.000000}{\begin{tabular}{ll}
\hline\hline\noalign{\smallskip}
{\bf Initial sequents:} & $D(y,x,w),\GSD,D(y,x,w)$
\\\noalign{\smallskip}\hline\noalign{\smallskip}
{\bf Rules for identity:}\phantom{aaaaaaaaaaaaaaaaa}&
\\\noalign{\smallskip}
\infer[\infrule Ref_=]
{\GSD}{{}w:x=x,\GSD}&
\infer[\infrule RigVar]{{} w:y=z,\GSD}{{}v:y=z,w:y=z,\GSD}
\\\noalign{\smallskip}
\infer[\infrule Repl]{E[y/x],w:y=z,\GSD,}{E[z/x],E[y/x],w:y=z,\GSD}
&\deduce[\text{$E$ is either $D(y,x,w)$  or $x_i\in w$ or $w:p$}]{\phantom{A} }{\phantom{ A} }
\\\noalign{\smallskip}\hline\noalign{\smallskip}
{\bf Rules for $\l$:}\phantom{aaaaaaaaaaaaaaaaaaaaaaa}&
\\\noalign{\smallskip}

\multicolumn{2}{c}{\infer[\infrule{ L\l,\, z\text{ fresh}}]{{}w:\l xB.t,\GSD}{D(t,z,w),w:B[z/x],\GSD }}\\\noalign{\medskip}
\multicolumn{2}{c}{\infer[\infrule R\l]{\GSD,w:\l xB.t}{\GSD,w:\l xB.t, D(t,y,w)\quad&\GSD,w:\l xB.t,w:B[y/x]} }

\\\noalign{\smallskip}\hline\noalign{\smallskip}

{\bf Rules for $D(\dots)$:}\phantom{aaaaaaaaaaaaaaaaa}\\\noalign{\smallskip}
\multicolumn{2}{c}{\infer[\infrule LD_1]{D(\riota x_1A,x_2,w),\GSD}{w:A[x_2/x_1],D(\riota x_1A,x_2,w),\GSD} }\\\noalign{\medskip}
\multicolumn{2}{c}{\infer[\infrule LD_2]{D(\riota x_1A,x_2,w)\GSD}{D(\riota x_1A,x_2,w),\GSD,w:A[y/x_1]\quad&w:x_2=y,D(\riota x_1A,x_2,w),\GSD} }
\\\noalign{\medskip}
\multicolumn{2}{c}{\infer[\infrule{RD,\, z\text{ fresh}}]{\GSD,D(\riota x_1A,x_2,w)}{\GSD,w: A[x_2/x_1]\qquad&w:A[z/x_1],\GSD,w: x_2=z}
}\\\noalign{\medskip}
\infer[\infrule DenVar]{{}\GSD}{D(x,x,w),{}\GSD}&
\infer[\infrule DenId]{D(y,x,w),{}\GSD}{w:y=x,D(y,x,w),\GSD}
\\\noalign{\smallskip}\hline\hline 
\end{tabular}}\end{center}
\end{table}

 The satisfaction clause for $\l x.A.t$ in a world $w$ is similar to  that for  $\exists xA$, the only difference being that $A$ has to be satisfied not by some arbitrary object of $D_w$, but by the one and only object of $D_\W$ that is denoted by $t$ in that world of that model. Therefore the rules for $\l$ are like the ones for $\exists$ in intuitionistic logic  with existence predicate, see  \cite{BI06,TS96}, save that they are restricted by  formulas of shape $D(t,x,w)$ instead of by atoms of shape $E t$. 
 
 Next, we briefly expalin the rules for $D(t,x,w)$. The  universal rule $DenId$ ensures  that if $y$ in $w$ picks the object denoted by $x$, then $x$ and $y$ denote the same object; and  the universal rule $DenVar$ ensures that variables denote at every world. The rules $LD_i$ and $RD$ are obtained as meaning-explanation of the denotation clause for definite descriptions.  This is done by first rewriting the denotation clause for $\riota x_1 A$ as:
 
\begin{tabular}{lll}\noalign{\smallskip}
$\V_w^\s(\riota xA)=o_1$&iff&$\s^{x\tr o_1}\models_w^\M A$ and $\forall o_2\in D\,(\s^{x\tr o_2}\models_w^\M A\to o_2=o_1)$ \\\noalign{\smallskip}
\end{tabular}

\noindent Then, from the left-to-right (right-to-left) direction of this semantic clause we easily obtain the rules $LD_i$ ($RD$). 

As shown in \cite{MO19}, for intuitionistic logic with existence predicate it is possible to obtain a simpler calculus by replacing the two premisses  rule \infer[\infrule R\exists^*]{\g\To \exists xA}{\g\To  E t&\g\To A[t/x]} with the (equivalent) one premiss rule \infer[\infrule R\exists]{E t,\g\To\exists xA}{E t,\g\To A[t/x]}.  The same phenomenon holds form QMLs with varying domains where we can use either two-premisses versions of rules $R\exists$ and $L\forall$ \cite{I18} or the simpler one-premiss versions thereof \cite{NP11}. In Table \ref{rulesQK} we have used the one-premiss rules and the same has been done for the rule $R\l$ in \cite{OC18}. Here, instead, we are forced to adopt the two-premisses version of the rule $R\l$ (and of $LD_2$) because the presence of definite description impairs the admissiblity of cut  with the one-premiss version of this rule. The problem, roughly, is that $D(\riota x_1A, x_2,w)$ (or $w:A[y/x_1]$ for rule $LD_2$) is not atomic and, therefore, cannot be a principal formula of the rule.

%
%
%
%
\section{Structural properties}\label{sec:structural}
\begin{lemma}[Initial sequents]\label{ax} Let $A$ be an  arbitrary $\mathcal{L}^\l$-formula. Sequents of the following shapes  are {\bf G3Q$\l$.L}-derivable: 
\begin{enumerate}
\item\qquad${}w:A,\GSD,w:A$
\item \qquad$D(\riota yA,z,w),\GSD,D(\riota yA,z,w)$ 
\end{enumerate}
\end{lemma}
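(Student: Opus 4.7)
The plan is to prove Parts 1 and 2 simultaneously by induction on the weight of the displayed formula: the weight of $w:A$ in Part 1 and the weight of $D(\riota yA, z, w)$ in Part 2. The non-standard clause assigning to a term $\riota xA$ the weight of the virtual labelled formula $w:\riota xA$ is precisely what makes the weight of $D(\riota yA, z, w)$ strictly exceed the weight of $w:A$, so the two parts can invoke each other on strictly lighter inputs. At each inductive step the recipe is the familiar G3-style one: apply a right rule followed by the corresponding left rule (or vice versa) on the displayed occurrences, choose the active variable of the second application to coincide with the eigenvariable of the first, and then close each resulting premise either by an initial sequent of $\mathbf{G3Q\l.L}$ or by one of the two induction hypotheses on a strictly lighter formula.

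For Part 1, the base cases are $A \equiv \bot$ (closed by $L\bot$) and $A$ atomic, i.e.\ of the form $Px_1,\ldots,x_n$ or $x_1 = x_2$ (an initial sequent of the calculus, since identity atoms are atomic in $\mathcal{L}^\l$). The cases for $\wedge, \vee, \to, \forall, \exists, \Box, \Diamond$ are handled exactly as in Proposition \ref{properties}.1. The only novel case is $A \equiv \l xB.t$: first apply $L\l$ to the antecedent occurrence, introducing a fresh variable $z$; then apply $R\l$ to the succedent occurrence, choosing the active variable to be $z$. This produces two premises. The one carrying $w:B[z/x]$ on both sides is closed by the induction hypothesis for Part 1 on $B[z/x]$, which is strictly lighter than $w:\l xB.t$ (variable-for-variable substitution preserves weight). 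The one carrying $D(t,z,w)$ on both sides is either an initial sequent of $\mathbf{G3Q\l.L}$ when $t$ is a variable, or closed by the induction hypothesis for Part 2 when $t \equiv \riota y'A'$, since the weight of $D(\riota y'A', z, w)$ equals that of the term $\riota y'A'$, which is strictly less than that of $w:\l xB.t$.

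For Part 2, I apply $RD$ to the displayed succedent $D(\riota yA, z, w)$, introducing a fresh variable $z'$. The first premise, which carries $w:A[z/y]$ in the succedent, is reduced by $LD_1$ on the antecedent $D$-formula to a sequent with $w:A[z/y]$ on both sides, closed by the induction hypothesis for Part 1 on $A[z/y]$ (lighter than $D(\riota yA, z, w)$ by one $\riota$-operator). The second premise, which carries $w:z=z'$ in the succedent, is handled by applying $LD_2$ with active variable chosen to be $z'$: the resulting subpremise with $w:A[z'/y]$ on both sides is closed by the induction hypothesis for Part 1, and the one with $w:z=z'$ on both sides is an initial sequent since identity is atomic. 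The main delicate point throughout is the verification of the weight inequalities that drive the recursion, and in particular the checks that justify the cross-references between the two parts via the non-standard weight clause on $\riota$-terms; a secondary but equally essential care is the systematic matching of active variables to eigenvariables, which is what forces each premise into the precise form on which the relevant induction hypothesis can be applied directly.
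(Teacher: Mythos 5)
Your proposal is correct and follows essentially the same route as the paper: a simultaneous induction on the (non-standard) weight of the two kinds of formulas, decomposing the displayed occurrences root-first via the left and right rules for the outermost operator ($\l$, $D$ included), matching active variables to eigenvariables, and closing each premise by an initial sequent or by the induction hypothesis of the appropriate part. The paper only spells out the $D(\riota yA,z,w)$ case, but your treatment of it (via $RD$, $LD_1$, $LD_2$) coincides with the derivation displayed there, and your weight bookkeeping for the $\l xB.t$ case is exactly the intended use of the clause assigning $\riota$-terms the weight of the corresponding labelled formula.
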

\begin{proof} The two cases are proved by simultaneous  induction on the weight of the principal formula. For the inductive steps it is enough to apply, root first, the rules for the outermost operator ($D$ included) of the principal formula and then  the inductive hypothesis (IH). To illustrate, for $D(\riota yA,z,w),\GSD,D(\riota yA,z,w)$ we have:

\noindent$$\scalebox{0.80000}{
\infer[\infrule RD]{D(\riota xA,z,w),\GSD,D(\riota xA,z,w)}{
\infer[\infrule LD_1]{D(\riota xA,z,w),\GSD,w:A[z/x]}{\infer[\infrule IH]{w:A[z/x],\dots\To w:A[z/x]}{}}&
\infer[\infrule LD_2]{w:A[y/x],D(\riota xA,z,w),\GSD,w:z=y}{
\infer[\infrule IH]{w:A[y/x],\dots\To\dots,w:A[y/x]}{\phantom{\g}}&
w:z=y,\dots\To w:z=y}}
}$$

\end{proof}

\begin{lemma}[$\alpha$-conversion]\label{alpha}
{\bf G3Q$\l$.L} $\vdash^n {}\GSD$ entails {\bf G3Q$\l$.L} $\vdash^n {}\g'\To\d'$, where $\g'$ ($\d'$) is  obtained from $\g$ ($\d$) by renaming some bound variable (without capturing variables).
\end{lemma}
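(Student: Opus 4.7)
The plan is to prove the lemma by induction on the height $n$ of the given derivation, extending the corresponding argument for $\mathbf{G3Q.L}$ (Proposition~\ref{properties}.2) to the new binders introduced by $\l$-abstraction and by definite descriptions. The base case is immediate: initial sequents of $\mathbf{G3Q\l.L}$ consist of atomic labelled formulas and of denotation atoms $D(y,x,w)$, neither of which contains bound occurrences to rename.

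For the inductive step, let $R$ be the last rule applied. If the renamed occurrence lies outside the binder that $R$ introduces---say, in a side formula, or in an active subformula not captured by $R$---then apply the induction hypothesis to each premise and conclude with $R$; all eigenvariable freshness conditions of $R$ are preserved because $\alpha$-conversion introduces no new free variables. The substantive cases are those in which $R$ is the binder of precisely the variable being renamed: $R\forall$ and $L\exists$ for the first-order quantifiers; $L\l$ and $R\l$ for principal formulas of the form $\l xB.t$; and $LD_1$, $LD_2$, $RD$ for formulas whose description term is $\riota x_1A$. In each of these cases, apply the same rule with the same eigenvariable: the premise required after the renaming differs from the one already at hand only by the composed substitution $(A[y/x])[z/y]$, which collapses to $A[z/x]$ whenever $y$ is fresh for $A$, i.e., exactly when the side condition of $\alpha$-conversion is satisfied. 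For instance, renaming $x_1$ into $z$ in the principal $\riota x_1A$ of an $LD_1$-inference reduces the new premise $w:(A[z/x_1])[x_2/z]$ to the original $w:A[x_2/x_1]$.

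The rules for identity (including $Repl$) and the universal denotation rules $DenVar$ and $DenId$ act only on atomic or denotation formulas without bound occurrences and therefore commute trivially with $\alpha$-conversion. The main subtlety is that the fresh name $y$ might coincide with an eigenvariable occurring somewhere in the given derivation; this is harmless because only finitely many eigenvariables appear in the derivation, and since $\alpha$-renaming never introduces new free occurrences, one can always choose $y$ distinct from all of them and, if needed, iterate the lemma to reach any desired target name. This completes the induction.
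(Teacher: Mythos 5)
Your proof is correct and follows essentially the same route as the paper: induction on the height of the derivation, with the only substantive cases being those where the last rule binds the variable being renamed, resolved by the observation that $(A[y/x])[z/y]$ is just a notational variant of $A[z/x]$ when $y$ is fresh. The paper illustrates this only for $L\l$, whereas you enumerate the remaining binder rules and add the (standard) remark that a non-fresh target name is reached by passing through a fresh intermediate; these are elaborations, not a different argument.
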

\begin{proof}
The proof is by induction on the height of the {\bf G3Q$\l$.L}-derivation $\D$ of ${}\GSD$. To illustrate, suppose we know that {\bf G3Q$\l$.L} $\vdash^n {}w:\l x.A.t,\GSD$, and we want to show that {\bf G3Q$\l$.L} $\vdash^n {}w:\l y.A[y/x].t,\GSD$ (with $y$ fresh). If $w:\l x.A.t$ is not principal in the last step of $\D$, the proof  is straightforward. Else, we transform
$$
\infer[\infrule L\l]{{}w:\l x.A.t,\GSD}{D(t,z,w),{}w:A[z/x],\GSD}\qquad\text{into}\qquad
\infer[\infrule L\l]{{}w:\l y.A[y/x].t,\GSD}{\infer[\star]{D(t,z,w),{}w:(A[y/x])[z/y],\GSD}{D(t,z,w),{}w:A[z/x],\GSD}}
$$
where the step $\star$ is height-preserving admissible since, having assumed that $y$ is fresh,  $w:(A[y/x])[z/y]$ is just a  cumbersome notation for $w:A[z/x]$. 
\end{proof}
\begin{lemma}[Substitutions]\label{subs}The following rules  of substitution are height-preserving admissible in {\bf G3Q$\l$.L}:
$$
\infer[\infrule{ [y/x]}]{{}\g[y/x]\To\d[y/x]}{{}\GSD}\qquad
\infer[\infrule{ [w/v]}]{{}\g[w/v]\To\d[w/v]}{{}\GSD}
$$
where $y$ is free for $x$ in each formula occurring in $\g,\d$ for rule $[y/x]$.
\end{lemma}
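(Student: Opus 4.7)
I propose to prove both admissibilities simultaneously by a straightforward induction on the height $n$ of the given derivation $\mathcal{D}$ of $\Gamma\Rightarrow\Delta$, treating the variable substitution $[y/x]$ and the label substitution $[w/v]$ in parallel since the case analyses are structurally identical. For the base case ($n=1$), initial sequents of both kinds listed in Lemma~\ref{ax} (the atomic case) and in Table~\ref{rulesQKld} (the $D$-case) remain initial sequents after applying either substitution componentwise; conclusions of the zero-premiss rules $L\bot$, $Ref_=$, $DenVar$, $DenId$ are handled likewise since substitution commutes with their schematic form.

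For the inductive step, I would do a case analysis on the last rule $R$ applied in $\mathcal{D}$. In the easy cases — the propositional rules, the modal rules $L\Box$, $R\Diamond$, the identity rules $Repl$, $RigVar$, the rules $LD_1$, $DenId$, and all the non-logical frame rules of Table~\ref{nonlogicalQK} — substitution commutes with the rule, so I apply the inductive hypothesis to the premiss(es) and then reapply $R$. A minor subtlety arises for $Repl$ and $RigVar$ under $[y/x]$: one has to verify that the substitution performed inside $E[z/x_0]$ (resp. inside the identity atom $w{:}y_0=z_0$) still matches the schema; this is routine since $y$ is assumed free for $x$ in every formula of $\Gamma,\Delta$.

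The genuinely delicate cases are the rules with a freshness side-condition: $R\forall$, $L\exists$, $R\Box$, $L\Diamond$, $Ser$, $L\lambda$, $RD$, and the two-premiss rule $LD_2$. Here the standard trick is to first use Lemma~\ref{alpha} (and its label-analogue, which is built into the label substitution rule $[w/v]$ itself) to rename the eigenvariable (or eigenlabel) $z$ (resp.\ $u$) to something genuinely fresh — fresh not only with respect to the premiss but also with respect to $x$ and $y$ (resp.\ $w$ and $v$). Then the substitution can be pushed through the rule without clashing with $z$, and the inductive hypothesis applies at the same height. For instance, for $L\lambda$ with conclusion $w_0{:}\lambda x_0 B.t,\Gamma\Rightarrow\Delta$ and premiss $D(t,z,w_0),w_0{:}B[z/x_0],\Gamma\Rightarrow\Delta$, I first rename $z$ to a $z'$ distinct from $x,y$, then apply IH with $[y/x]$ to obtain a derivation of the substituted premiss, and finally reapply $L\lambda$.

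The principal obstacle will be the rules $L\lambda$, $RD$, $LD_1$, $LD_2$ under the variable substitution $[y/x]$, because the principal formula contains a $\iota$-term $\iota x_1 A$ whose internal bound variable $x_1$ may collide with $x$ or $y$, and the active formulas involve nested substitutions $A[z/x_1]$, $A[x_2/x_1]$, $B[z/x]$, $A[y/x_1]$. To treat these uniformly, before pushing $[y/x]$ through the rule I would assume, using Lemma~\ref{alpha} on the conclusion, that the bound variable $x_1$ of the principal $\iota$-term (and of any $\lambda$ or quantifier inside the active formulas) differs from $x$, $y$, and the eigenvariable; this is where the side condition ``$y$ free for $x$'' together with $\alpha$-conversion ensures that the equation $(A[z/x_1])[y/x]=(A[y/x])[z/x_1]$ holds, so that applying IH and then reapplying the $D$-rule yields exactly the substituted conclusion. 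The label substitution $[w/v]$ for the same rules is easier, since labels cannot be captured by a $\lambda$ or $\iota$ binder and only the freshness of the eigenlabel needs to be arranged. In all cases the transformation preserves the height, giving the desired height-preserving admissibility.
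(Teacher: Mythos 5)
Your overall strategy coincides with the paper's: induction on the height of the derivation, with the routine cases handled by commuting the substitution with the last rule (and delegated to the corresponding lemma of the labelled-calculus literature), and the delicate cases being the rules with a freshness condition, where one first renames the eigenvariable or eigenlabel away from $x,y$ (resp.\ $v,w$) and then pushes the substitution through. Your treatment of the bound variable of the $\iota$-term via Lemma~\ref{alpha}, and the identity $(A[z/x_1])[y/x]=(A[y/x])[z/x_1]$ guaranteed by the freshness assumptions, likewise matches the syntactic rewriting step the paper marks with $\star$.

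The one step that would fail as written is your appeal to Lemma~\ref{alpha} to rename the eigenvariable of $L\lambda$ (and similarly of $R\forall$, $L\exists$, $RD$, $LD_2$). The eigenvariable occurs \emph{free} in the premiss --- e.g.\ in $D(t,z,w)$ and $w{:}B[z/x_0]$ for $L\lambda$ --- whereas Lemma~\ref{alpha} only licenses the renaming of \emph{bound} variables, so $\alpha$-conversion cannot move it out of the way. The correct tool, and the one the paper uses, is the substitution rule itself: the premiss has smaller derivation height, so the inductive hypothesis already yields a height-preserving renaming $[y'/z]$ of the eigenvariable for a fresh $y'$; a second application of the inductive hypothesis then performs $[y/x]$, and the rule is reapplied. (You describe exactly this mechanism for the label case, so the repair is only a matter of invoking the right lemma in the variable case.) A further cosmetic slip: $L\bot$, $Ref_=$, $DenVar$ and $DenId$ are not zero-premiss rules in this calculus --- each has one premiss and belongs to the inductive step, handled by commutation --- but this does not affect the argument.
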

\begin{proof} Both proofs are by induction on the height of the derivation $\D$ of the premiss ${}\GSD$. The  base cases and  the inductive steps where the last rule is not a rule from Table \ref{rulesQKld} are proved in \cite[Lemma 12.4]{NP11}. 

We consider explicitly only he case of rule $[y/x]$ where the last step is by $L\l$ and the substitution $[y/x]$ clashes with its variable condition. E.g.,  the last step of $\D$ is
$$
\infer[\infrule L\l]{{}w:\l z.A.t,\g'\To\d}{D(t,y,w),w:A[y/z],\g'\To\d}
$$
with $x$ occurring free in $w:A[y/z],\g',\d$ and/or $t\equiv x$. We apply IH twice to the premiss of the last step of $\D$, the first time to replace $y$ with $y'$, for some fresh variable $y'$, and the second time to replace $x$ with $y$. We finish by applying rule $L\l$. Thus, assuming  $z\not\equiv x$,  we have transformed $\D$ into $\D[y/x]$:

$$
\infer[\infrule L\l]{w:\l z.(A[y/x]).(t[y/x]),\g'[y/x]\To\d[y/x]}{\infer[\infrule \star]{D(t[y/x],y',w),w:(A[x/y])[y'/z],\g'[y/x]\To\d[y/x]}{\infer[\infrule IH]{D(t[y/x],y',w),w:(A[y'/z])[y/x],\g'[y/x]\To\d[y/x]}{\infer[\infrule IH]{D(t,y',w),{}w:A[y'/z],\g'\To\d}{D(t,y,w),w:A[y/z],\g'\To\d}}}}
$$

\noindent which has the same height as $\D$ because the steps by IH are height-preserving admissible and the step by $\star$ is an height-preserving admissible  rewriting  that is feasible because $z\not\equiv x$ and $y'\not\in \{ y,x\}$.
\end{proof}

\begin{theorem}[Weakening]\label{weak}The   following rules  are height-preserving admissible in {\bf G3Q$\l$.L}:
$$
\infer[\infrule LW]{\g',\GSD}{\GSD}\qquad
\infer[\infrule RW]{\GSD,\d'}{\GSD}
$$
\end{theorem}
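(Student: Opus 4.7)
The plan is to prove both $LW$ and $RW$ simultaneously by induction on the height $n$ of the derivation $\D$ of the premiss ${}\GSD$.  The proof is essentially the standard one for labelled \textbf{G3}-style calculi, and most of it reduces to a case analysis on the last rule of $\D$.

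For the base case ($n=1$), $\GSD$ is either an initial sequent of Table \ref{rulesQK} (of shape ${}w:p,\g''\To\d'',w:p$ with $p$ atomic), an initial sequent of Table \ref{rulesQKld} (of shape $D(y,x,w),\g''\To\d'',D(y,x,w)$), or a conclusion of the zero-premiss rule $L\bot$.  In each case, weakening the context on either side yields another instance of the same initial sequent or another application of $L\bot$, so ${}\g',\GSD$ and ${}\GSD,\d'$ are both derivable with height $1$.

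For the inductive step, consider the last rule $\r$ applied in $\D$.  If $\r$ has no variable/label freshness condition (e.g., the propositional rules, $L\Box$, $R\Diamond$, $R\exists$, $L\forall$, the identity rules, $L\l$'s companion $R\l$ in its $LD_1$/$Repl$/$DenId$/$DenVar$ form, and all the non-logical rules of Table \ref{nonlogicalQK} except $Ser$), then I apply the inductive hypothesis to each premiss of $\r$ (adding $\g'$ on the left or $\d'$ on the right) and reapply $\r$; the height does not increase.  The same works for rules with two or three premisses such as $R\l$, $LD_2$ and $RD$, since the weakening formulas can be added to every premiss uniformly.

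The main obstacle is the treatment of rules with a freshness condition, namely $R\forall$, $L\exists$, $R\Box$, $L\Diamond$, $Ser$, $L\l$ and $RD$.  For these the fresh eigenvariable or eigenlabel introduced by $\r$ may already occur in the context $\g'$ or $\d'$ we wish to add, and a blind reapplication of $\r$ would violate the freshness side condition.  I handle this uniformly by first picking a genuinely fresh variable (or label) $z'$ that occurs neither in $\g,\d,\g',\d'$ nor in the formulas active in $\r$, then using Lemma \ref{subs} to rename, height-preservingly, the offending eigenvariable to $z'$ in the premiss(es) of $\r$, then applying the inductive hypothesis to weaken by $\g'$ or $\d'$, and finally reapplying $\r$ with $z'$ as its eigenvariable.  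For instance, in the case of $L\l$ with premiss $D(t,z,w),w:B[z/x],\GSD$, I first rewrite it as $D(t,z',w),w:B[z'/x],\GSD$ via $[z'/z]$, then weaken by $\g'$ via IH, then reapply $L\l$ to obtain $\g',w:\l xB.t,\GSD$ with $z'$ fresh as required.  Since both the renaming step (Lemma \ref{subs}) and the IH step are height-preserving, the resulting derivation still has height at most $n$, closing the induction.
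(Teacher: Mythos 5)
Your proof is correct and follows essentially the same route as the paper's: induction on the height of the derivation, delegating the rules already present in \textbf{G3Q.L} to the standard argument of Negri--von Plato and handling the rules with freshness conditions (including $L\l$ and $RD$) by first renaming the eigenvariable via the height-preserving substitution lemma before applying the inductive hypothesis and reapplying the rule. The paper merely states this analogy with the $L\exists$/$R\exists$ cases rather than spelling it out, so your write-up is simply a more explicit version of the same argument.
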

\begin{proof} The proofs are by induction on the height of the derivation $\D$ of the premiss ${}\GSD$. The  base cases and  the inductive cases where the last step of $\D$ is not by a rule from Table \ref{rulesQKld} are proved in \cite[Thm.~12.5]{NP11}. The proofs of the inductive cases when the last step of $\D$ is by $L\l$ or by $R D$ ($R\l$ or $LD_i$) are analogous to the ones in \cite[Thm.~12.5]{NP11} with last step of $\D$ by rule $L\exists$  ($R\exists$, respectively). The remaining cases are similar to the other ones  by  geometric rules and can be omitted.
\end{proof}
\begin{lemma}
The rules for $\l$ and for $D(\dots)$ are invertible.
\end{lemma}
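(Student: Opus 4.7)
The rules to be inverted split into two families. In the first family — $R\l$, $LD_1$, $LD_2$, $DenId$, and $DenVar$ — the principal formula (if any) is repeated in every premiss, and the premisses are obtained from the conclusion simply by adjoining some additional formulas. So invertibility is immediate: given a derivation of the conclusion with height $n$, height-preserving weakening (Theorem~\ref{weak}) produces a derivation of each premiss with height at most $n$. For instance, from a derivation of $\Gamma\Rightarrow\Delta,w:\l xB.t$ of height $n$, two applications of $RW$ give derivations of height $n$ of both premisses of $R\l$; the cases for $LD_1$, $LD_2$, $DenId$ and $DenVar$ are analogous.

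The genuinely interesting cases are $L\l$ and $RD$, where the principal formula disappears from the premiss(es) and a fresh eigenvariable is introduced. I will treat $L\l$ in detail; the argument for $RD$ is parallel. The precise claim is: if $\mathcal{D}$ is a derivation of $w:\l xB.t,\Gamma\Rightarrow\Delta$ of height $n$, then for every $z$ not occurring in $\mathcal{D}$ the sequent $D(t,z,w),w:B[z/x],\Gamma\Rightarrow\Delta$ is derivable with height at most $n$; the statement for an arbitrary $z$ fresh for $\Gamma,\Delta$ then follows from the height-preserving substitution $[z/z']$ of Lemma~\ref{subs}. The proof is by induction on $n$. For the base case, a derivation of height one is an initial sequent or a conclusion of $L\bot$; since $\l xB.t$ is neither atomic nor of the form $D(y,x,w)$, the matching atom lies entirely inside $\Gamma,\Delta$, so the enlarged sequent $D(t,z,w),w:B[z/x],\Gamma\Rightarrow\Delta$ is initial (or an $L\bot$-conclusion) for the same reason. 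For the inductive step, inspect the last rule of $\mathcal{D}$. If $w:\l xB.t$ is not principal, apply the inductive hypothesis to each premiss and reapply the rule; any clash between $z$ and an eigenvariable of the last rule is avoided by first invoking Lemma~\ref{alpha} or, equivalently, by having chosen $z$ globally fresh for $\mathcal{D}$. If $w:\l xB.t$ is principal, the last rule is $L\l$ with some locally fresh $z'$, whose immediate premiss is $D(t,z',w),w:B[z'/x],\Gamma\Rightarrow\Delta$; the height-preserving substitution $[z/z']$ of Lemma~\ref{subs} yields the target sequent at the same height.

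The argument for $RD$ follows the same template. In the base case, a principal formula of the form $D(\riota x_1A,x_2,w)$ can never match an initial sequent $D(y,x,w),\Gamma\Rightarrow\Delta,D(y,x,w)$, since the latter requires a variable in the first coordinate; hence the initial sequent must be witnessed by some other atom in $\Gamma,\Delta$ and survives the passage to either of the desired premisses. The inductive step reduces to the inductive hypothesis unless the last rule is $RD$ itself with the matching principal formula, in which case its two premisses are — up to a renaming of the eigenvariable of the second premiss via Lemma~\ref{subs} — precisely the two sequents we need. The one substantive difficulty in both inductions is managing the interaction between the eigenvariable $z$ introduced by invertibility and the eigenvariable conditions attached to rules occurring in $\mathcal{D}$; this is handled uniformly by choosing $z$ globally fresh for $\mathcal{D}$ and using the height-preserving substitution and $\alpha$-conversion lemmas already established (Lemmas~\ref{subs} and~\ref{alpha}) to reconcile eigenvariable choices.
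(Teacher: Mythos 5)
Your proof is correct and follows essentially the same route as the paper: the rules that repeat their principal formula ($R\l$, $LD_1$, $LD_2$, $DenVar$, $DenId$) are Kleene-invertible via height-preserving weakening, while $L\l$ and $RD$ are handled by induction on the height of the derivation, splitting on whether the principal formula was introduced in the last step and using height-preserving substitution to manage eigenvariable clashes. The only difference is presentational — the paper works out $RD$ and declares $L\l$ analogous, whereas you work out $L\l$ and declare $RD$ analogous — and your treatment of the base cases is, if anything, more explicit than the paper's.
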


\begin{proof}
Rules $R\l$ ,$L D_i$, \emph{DenVar}, and  \emph{DenId} are `Kleene'-invertible thanks to the repetition of the principal formula in the premiss(es).

Let's consider  $R D$. Suppose we have a $\mathbf{G3Q\l.L}$-derivation $\mathcal{D}$ of height $n$ of the sequent $\GSD,D(\riota xA,z,w)$. If $n=0$ or if $D(\riota xA,z,w)$ is principal in the last step of $\mathcal{D}$, the lemma holds trivially. Else we reason by cases on the last step $R$ of $\mathcal{D}$ and we transform the derivation(s) of its premiss(es) $\g_i\To\d_i,D(\riota xA,z,w)$ ($i\in\{1,2\}$) as follows: first, if $R$ has a variable condition, we apply an hp-admissible instance of substitution to ensure it differs from $y$; then we apply the inductive hypothesis to obtain either $\g'_i\To\d'_i,w:A[z/x]$ or $w:A[y/x],\g'_i\To\d'_i,w:z=y$; finally we apply an instance of $R$. 

Rule $L\l$ can be treated analogously.
\end{proof}
\begin{corollary}[Invertibility]\label{inv} Each rule of {\bf G3Q$\l$.L} is height-preserving invertible.
\end{corollary}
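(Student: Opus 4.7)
The plan is to combine the preceding lemma with the analogous result for the base calculus $\mathbf{G3Q.L}$ and with the height-preserving admissibility of weakening just established. The rules of $\mathbf{G3Q\lambda.L}$ fall naturally into three groups, and I would dispatch each in turn.

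First, for the rules inherited from Tables \ref{rulesQK} and \ref{nonlogicalQK}, the height-preserving invertibility proof of Proposition \ref{properties}.5 (given in \cite[Chap.~12.1]{NP11}) carries over without modification. That argument proceeds by induction on derivation height and by cases on the last rule applied, inspecting only whether the formula we wish to invert is principal in the last step; the new syntactic material of $\mathscr{L}^\lambda$ (definite descriptions, identity atoms, denotation atoms) only adds extra cases to the induction and never interferes with the existing ones, since these ingredients never appear principally in the rules of this first group.

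Second, the identity rules $Ref_=$, $RigVar$ and $Repl$ all repeat the principal formula in every premiss. Hence height-preserving invertibility is an immediate consequence of Theorem \ref{weak}: given a derivation of the conclusion of one of these rules, a single height-preserving application of $LW$ adds the active formula(s) to yield a derivation of each premiss of the same height. The same trivial remark covers the two universal denotation rules $DenVar$ and $DenId$.

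Third, for the rules $L\lambda$, $R\lambda$, $LD_1$, $LD_2$ and $RD$, the content of the preceding lemma is exactly what is needed; and an inspection of its proof confirms that the invertibility obtained is height-preserving, since it relies only on height-preserving instances of substitution (Lemma \ref{subs}), height-preserving weakening (Theorem \ref{weak}), and the inductive hypothesis at strictly smaller height. There is, accordingly, no genuine obstacle to overcome: the corollary is essentially a bookkeeping statement assembling the three groups of rules, and the only point requiring care is to verify that in each subproof of the previous lemma the transformations do not increase derivation height — which, as just noted, they do not.
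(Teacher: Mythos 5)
Your proposal is correct and follows essentially the route the paper intends: the corollary is left without explicit proof precisely because it assembles Proposition \ref{properties}.5 for the inherited rules, height-preserving weakening (Theorem \ref{weak}) for the rules that repeat their principal formulas (or have none) in the premisses, and the preceding lemma for the $\lambda$- and $D(\dots)$-rules. Your additional remark that the lemma's inductive argument for $RD$ and $L\lambda$ is in fact height-preserving (resting only on hp-substitution, hp-weakening and the induction hypothesis) is exactly the bookkeeping needed to upgrade the lemma's bare ``invertible'' to the corollary's ``height-preserving invertible.''
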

\begin{theorem}[Contraction]\label{cont} The following rules  are height-preserving admissible in {\bf G3Q$\l$.L}:
$$
\infer[\infrule LC]{\g',\GSD}{\g',\g',\GSD}\qquad
\infer[\infrule RC]{\GSD,\d'}{\GSD,\d',\d'}
$$
\end{theorem}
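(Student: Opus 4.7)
The plan is to prove both rules of contraction simultaneously by induction on the height $n$ of the derivation $\D$ of the premiss. Most cases follow \cite[Thm.~12.6]{NP11}: specifically, all cases where the last step of $\D$ is by a rule already in {\bf G3Q.L}, or where the contracted formula is not principal in the last step. In each such case one appeals to the inductive hypothesis on each premiss of the last step and reapplies the rule; the contexts are merely modified to carry one occurrence of $\g'$ (resp.\ $\d'$) instead of two. The only genuinely new cases are those in which the last step of $\D$ is by a rule from Table \ref{rulesQKld} and the contracted formula is principal.

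For the \emph{one-premiss} new rules with repetition of the principal formula ($LD_1$, $DenVar$, $DenId$, $Ref_=$, $RigVar$, $Repl$), the principal formula appears also in the premiss, so one simply applies IH to the premiss and then reapplies the rule; this works uniformly because contraction and the premiss have the same height relation. For $L\l$ (which does not repeat the principal formula but discharges it in favour of an active formula containing a fresh variable), I would use Corollary \ref{inv} twice to invert both copies of $w:\l xB.t$ in the antecedent, obtaining from $\D$ a height-$(n-1)$ derivation of $D(t,z_1,w), w:B[z_1/x], D(t,z_2,w), w:B[z_2/x], \g \To \d$ for fresh $z_1,z_2$; then the substitution Lemma \ref{subs} (rule $[z_1/z_2]$) collapses the two active pairs into a single duplicated pair, IH contracts these, and a final application of $L\l$ restores $w:\l xB.t$.

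For the \emph{two-premiss} new rules ($R\l$, $LD_2$, $RD$) the principal formula is preserved in every premiss. Suppose, for instance, that $\D$ ends with
$$
\infer[R\l]{\GSD, w:\l xB.t, w:\l xB.t}{\GSD, w:\l xB.t, w:\l xB.t, D(t,y,w) & \GSD, w:\l xB.t, w:\l xB.t, w:B[y/x]}
$$
Each premiss has $w:\l xB.t$ in duplicate, so IH on each yields height-$(n-1)$ derivations of the single-occurrence versions, and a final application of $R\l$ gives $\GSD, w:\l xB.t$. The same idea handles $LD_2$ and $RD$, except that for $RD$ the freshness condition on $z$ in the right premiss may clash with the variables carried by the contracted multiset $\d'$; one first applies Lemma \ref{subs} to rename $z$ to a genuinely fresh variable before invoking IH.

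The main obstacle I anticipate is precisely this interaction between freshness side-conditions in the new rules and the inductive step: the invertibility of $L\l$ and $RD$ was proved using the repetition-of-principal-formula technique rather than by eigenvariable renaming, so when both copies of a principal formula are inverted to instantiate a fresh eigenvariable on each side, care is required to keep the two instances distinct before collapsing them by substitution. Once the use of Lemmas \ref{alpha} and \ref{subs} is properly sequenced with Corollary \ref{inv}, the proof proceeds smoothly and parallels the corresponding case for $\exists$/$\forall$ in {\bf G3Q.L}.
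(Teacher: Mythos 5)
Your overall strategy coincides with the paper's (induction on height, case split on whether the contracted formula is principal, invertibility for rules that do not repeat their principal formula, direct appeal to IH for those that do), but your classification of $RD$ contains a genuine error. You group $RD$ with $R\l$ and $LD_2$ as a two-premiss rule in which ``the principal formula is preserved in every premiss'' and propose to contract the duplicate inside each premiss by IH and then reapply the rule. Inspecting Table \ref{rulesQKld}, $RD$ does \emph{not} repeat $D(\riota x_1A,x_2,w)$ in its premisses, which are $\GSD,w:A[x_2/x_1]$ and $w:A[z/x_1],\GSD,w:x_2=z$. Hence, when the premiss of $RC$ is $\g\To\d,D(\riota x_1A,x_2,w),D(\riota x_1A,x_2,w)$ and the last step is $RD$ acting on one copy, each premiss of that step carries only the single context copy of the denotation formula: there is nothing for IH to contract, and reapplying $RD$ simply reintroduces the second copy. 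The difficulty you flag for $RD$ (an eigenvariable clash with $\d'$) is not the real one, since $z$ is fresh for the whole conclusion and so for $\d'$. The correct treatment is precisely the one you already use for $L\l$: invert the remaining copy of $D(\riota x_1A,x_2,w)$ in each premiss via Corollary \ref{inv}, align eigenvariables by Lemma \ref{subs}, contract the duplicated active formulas by IH, and close with one application of $RD$. This is exactly how the paper proceeds, grouping $L\l$ and $RD$ together as the two new rules handled through invertibility.

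A secondary omission: you never address the case in which \emph{both} contracted occurrences are principal in the last step. Among the rules of Table \ref{rulesQKld} this arises for $Repl$, whose two principal formulas can both be occurrences of the contracted formula; the active formula is then forced to be of shape $w:z=z$, and after applying IH one must additionally invoke $Ref_=$ to discharge it, since merely ``reapplying the rule'' (your uniform recipe for the repetition rules) does not yield the contracted conclusion. (Relatedly, $DenVar$ and $Ref_=$ have no principal formula in their conclusion, so they belong to the non-principal case rather than to your repetition bucket.) With the $RD$ case repaired and the doubly-principal $Repl$ case added, your argument matches the paper's proof.
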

\begin{proof} The proof is handled by a simultaneous induction on the height of the derivations of the  premisses of  $LC$ and  $RC$. Without loss of generality, we assume the multiset we are contracting is made of only one formula $E$.

The base cases obviously hold, and the inductive cases depend on whether zero,  one, or two instances of  $E$ are principal in the last step $R$ of the derivation $\D$ of the premiss. If zero instances are principal in $R$, we apply IH to the premiss(es) of $R$ and then an instance of rule $R$, and we are done. 

If one instance is principal and $R$ is by a propositional rule or by one of $R\forall,L\exists, R\Box,L\Diamond, L\l$ and $R D$, we proceed by first applying invertibility to that rule, then we apply IH as many times as needed, and we conclude by applying an instance of that rule. If, instead, one instance is principal and $R$ is by a rule with repetition of the principal formula(s) in the premiss, we use the hp-admissibility of the rules of weakening, then IH and $R$.

If two instances are principal,  $R$ is a geometric rule and, if needed, we make use of the fact that $R$ satisfies the closure condition (see \cite[p. 100]{NP11}). To illustrate, the case of $Euclid$ is taken care by the presence of its contracted instances $Euclid^c$. For $Trans$,  we have three  occurences of $w\RR w$  in the premiss of this rule instances: two principal and one active. We apply IH twice and we are done. For $Repl$, the active formula of the last rule instance must be of shape $w:x=x$, and, after having applied IH, we can get rid of it by applying $Ref_=$.
\end{proof}

\begin{theorem}[Cut]\label{cut}The following rules of Cut are admissible in {\bf G3Q.L}:
$$
\infer[\infrule Cut]{\g',\GSD,\d'}{\GSD,w:A&w:A,\g'\To\d'}
$$
$$
\infer[\infrule Cut']{\g',\GSD,\d'}{\GSD,D(t,x,w)&D(t,x,w),\g'\To\d'}
$$
\end{theorem}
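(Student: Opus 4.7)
The plan is to prove \emph{Cut} and \emph{Cut'} simultaneously by a primary induction on the weight of the cut formula and a secondary induction on the cut-height (the sum of the heights of the derivations of the two premises). Following the standard G3 Gentzen strategy, I distinguish base cases, in which one premise is an initial sequent --- using Lemma \ref{ax} to handle the case when the cut formula matches a generalised initial sequent so that no further reduction at the leaves is required --- from inductive cases, which split according to whether the cut formula is principal in neither, one, or both premises. Whenever the cut formula is not principal in at least one premise, the cut is permuted upward into that derivation, applying hp-admissible substitution (Lemma \ref{subs}) to rename eigenvariables where freshness conditions would otherwise be violated; this strictly decreases the cut-height.

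For the rules inherited from Tables \ref{rulesQK} and \ref{nonlogicalQK}, the principal-case reductions are exactly as in \cite[Thm.~12.5]{NP11}. Among the new rules of Table \ref{rulesQKld}, the identity rules and the universal rules $DenVar$ and $DenId$ are handled by the standard geometric-rule techniques, using in particular the fact that these rules repeat the relevant atom in the premise so the cut simply permutes above them; the genuinely new work lies in the principal cuts on $w:\l xB.t$ and on $D(\riota x_1 A, x_2, w)$.

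For a principal cut on $w:\l xB.t$, the left derivation ends with $R\l$, with premises $\g \To \d, w:\l xB.t, D(t,y,w)$ and $\g \To \d, w:\l xB.t, w:B[y/x]$, while the right ends with $L\l$, with premise $D(t,z,w), w:B[z/x], \g'\To\d'$ ($z$ fresh). First, substitution $[y/z]$ (Lemma \ref{subs}) is applied to the $L\l$ premise, yielding $D(t,y,w), w:B[y/x], \g'\To\d'$. Two cut-height reductions on $w:\l xB.t$, between each of the $R\l$ premises and the original right premise of the cut, then produce $\g,\g' \To \d,\d', D(t,y,w)$ and $\g,\g' \To \d,\d', w:B[y/x]$. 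Two cuts of strictly smaller weight --- one on $D(t,y,w)$ via \emph{Cut'}, one on $w:B[y/x]$ via \emph{Cut} --- combine these with the substituted $L\l$ premise, and hp-admissible contractions (Theorem \ref{cont}) absorb the extra copies of $\g,\g',\d,\d'$. The principal cut on $D(\riota x_1 A, x_2, w)$ against $LD_1$ is analogous but simpler, requiring a single smaller cut on $w:A[x_2/x_1]$; the case against $LD_2$ is structurally identical to the $\l$ case, with the two reducing cuts targeting $w:A[y/x_1]$ and the weight-zero atom $w:x_2=y$ after a substitution step aligning the $LD_2$ eigenvariable with the one from $RD$.

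The main obstacle will be careful bookkeeping of weights: one must verify that the non-standard weight definition really makes the weight of $D(\riota x_1 A, x_2, w)$ strictly greater than that of every $w:A[y/x_1]$, and that the weight of $w:\l xB.t$ strictly exceeds the weights of both $D(t,y,w)$ and $w:B[y/x]$. This is precisely what justifies the earlier choice of the two-premise versions of $R\l$ and $LD_2$ rather than their one-premise variants: the extra premise of $R\l$ (respectively $LD_2$) is what makes $D(t,y,w)$ (respectively $w:x_2=y$) available as the side formula of a reducing cut of strictly smaller weight, and without it the standard cut-reduction pattern breaks down at exactly these two principal cases.
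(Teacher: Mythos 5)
Your proposal is correct and follows essentially the same route as the paper: a simultaneous induction on cut-formula weight with a sub-induction on cut-height, the same four-way case split, and the same principal-case reductions for $w{:}\l xB.t$ and $D(\riota x_1A,x_2,w)$ (two height-reducing cuts against the $R\l$/$RD$ premises followed by two weight-reducing cuts, one of them a \emph{Cut'}, plus substitution and contraction). Your closing remark about the two-premise forms of $R\l$ and $LD_2$ being exactly what makes the reducing cuts available matches the paper's own motivation in Section \ref{sec:desclab}.
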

\begin{proof} We prove the two cases simultaneously. The proof, which extends that of \cite[Thm.~12.9]{NP11}, considers an uppermost instance of  either $Cut$ or $Cut'$  which is handled by a principal induction on the weight of the cut-formula  with a sub-induction on the sum of the heights of the derivations $\D_1$ and $\D_2$ of the two premisses of cut (\emph{cut-height}, for shortness). 
The proof can be organized in four exhaustive cases: in {\bf case 1} one of the two premisses is an initial sequent. In {\bf case 2} the cut formula is not principal in the left premiss only and in {\bf case 3} it is not principal in the right premiss. Finally, in {\bf case 4}, the cut formula is principal in both premisses.

In {\bf case 1}, the conclusion of $Cut$ ($Cut'$) is an initial sequent and, therefore, we can dispense with that instance of $Cut$ ($Cut'$).

In {\bf case 2}, we transform the derivation as follows. First, if the last rule applied in $\D_1$ has a variable condition, we apply an height-preserving admissible substitution to rename its \emph{eigenvariable} with a fresh one. Then,  we apply one  instance of $Cut$ ($Cut'$) on  each premiss of $\D_1$ with the conclusion of $\D_2$. These instances of $Cut$  ($Cut'$) are admissible by IH because they have a lesser cut-height. We finish  by applying an instance of the last rule applied in $\D_1$.

{\bf Case 3} is similar to case 2.  To illustrate, suppose the last step of $\D_2$ is by $L\l$ (with $y$ \emph{eigenvariable}), we transform
$$
\infer[\infrule Cut]{v:\l x.B.t,\g',\GSD,\d'}{{}\GSD,w: A&\infer[\infrule L\l]{w:A,v: \l x.B.t,\g'\To\d'}{D(t,y,v),w:A,v:B[y/x],\g'\To\d'}}
$$
into
$$
\infer[\infrule L\l]{v:\l x.B.t,\g',\GSD,\d'}{\infer[\infrule Cut]{D(t,z,v),v:B[z/x],\g,\g'\To\d,\d'}{{}\GSD,w: A&\infer[\infrule{[z/y]}]{D(t,z,v),w:A,v:B[z/x],\g'\To\d'}{D(t,y,v),w:A,v:B[y/x],\g'\To\d'}}}
$$

In {\bf case 4}, we have subcases according to the principal operator of the cut-formula. We consider only the case where the cut-formula  is of shape $w:\l yB.t$  or  $D(\riota xA,z,w)$ (see \cite[Thm.~12.9]{NP11} for the other cases). 
\begin{itemize}
\item
Cut formula is $w: \l yB.t$. We transform 
\end{itemize}
$$\scalebox{0.90000}{
\infer[\infrule Cut]{\g',\GSD,\d'}{
\infer[\infrule R\l]{\g'\To\d', w:\l yB.t}{\deduce{\g'\To\d',w:\l yB.t,D(t,x,w)}{\vdots\;\mathcal{D}_{11}}&
\deduce{\g'\To\d',w:\l yB.t,w:B[x/y]}{\vdots\;\mathcal{D}_{12}}}&
\infer[\infrule L\l]{w:\l yB.t,\GSD}{\deduce{D(t,z,w),w:B[z/y],\GSD}{\vdots\;\mathcal{D}_{21}}}}
}$$
into:
$$\scalebox{0.96000}{
\infer=[\infrule LC+RC]{\g',\GSD,\d'}{\infer[\infrule Cut_2]{\g',\g',\g,\g,\GSD,\d,\d,\d',\d'}{\infer[\infrule Cut_1]{\g',\GSD,\d', w:B[x/y]}{
\deduce{\g'\To\d',w:B[x/y],w:\l yB.t}{\vdots\;\mathcal{D}_{12}}&
\deduce{w:\l yB.t,\GSD}{\vdots\mathcal{D}_2}
}&
\deduce{w:B[x/y],\g',\g,\GSD,\d,\d'}{\vdots\;\mathcal{D}_{3}}
}
}
}$$
where $\D_3$ is the following derivation:
$$\scalebox{0.96000}{
\infer[\infrule Cut'_4]{w:B[x/y],\g',\g,\GSD,\d,\d'}{\infer[\infrule Cut_3]{\g',\GSD,\d', D(t,x,w)}{
\deduce{\g'\To\d',D(t,x,w),w:\l yB.t}{\vdots\;\mathcal{D}_{11}}&
\deduce{w:\l yB.t,\GSD}{\vdots\mathcal{D}_2}
}&
\infer[\infrule{[x/z]}]{D(t,x,w),w:B[x/y],\GSD}{\deduce{D(t,z,w),w:B[z/y],\GSD}{\vdots\;\mathcal{D}_{21}}}
}
}$$
 \emph{Cut}$_1$ and \emph{Cut}$_3$ are  admissible because they have  has lesser cut-height; \emph{Cut}$_2$ and \emph{Cut}$'_4$ because their  cut-formula has lower weight.

\begin{itemize}
\item
Cut formula is $D(\riota xA,z,w)$.\vspace{0.2cm}
\end{itemize}

- If right premise is by $LD_1$, we have:

$$\scalebox{0.95000}{
\infer[\infrule Cut']{\g',\GSD,\d'}{
\infer[\infrule RD]{\g'\To\d',D(\riota xA,z,w)}{
\deduce{\g'\To\d', w:A[z/x]}{\vdots\;\mathcal{D}_{11}}&
\deduce{w:A[y/x],\g'\To\d' ,w:z=y}{\vdots\;\mathcal{D}_{12}}
}&
\infer[\infrule LD_1]{D(\riota xA,z,w),\GSD}{
\deduce{w:A[z/x],D(\riota xA,z,w),\GSD}{\vdots\;\mathcal{D}_{21}}
}
}
}$$

and we transform it into:

$$
\infer=[\infrule LC+RC]{\g',\GSD,\d'}{
\infer[\infrule Cut_2]{\g',\g',\GSD,\d',\d'}{
\deduce{\g'\To\d', w:A[z/x]}{\vdots\;\mathcal{D}_{11}}
&
\infer[\infrule Cut_1']{w:A[z/x],\g',\GSD,\d'}{
\deduce{\g'\To\d',D(\riota xA,z,w)}{\vdots\;\mathcal{D}_1}&
\deduce{D(\riota xA,z,w),w:A[z/x],\GSD}{\vdots\;\mathcal{D}_{21}}}
}}
$$
where $Cut'_1$ has lesser cut-height and $Cut_2$ has a cut formula of lower weight.

- if the right premise by $LD_2$, we have:\vspace{0.2cm}

$$\scalebox{0.6800}{
\infer[\infrule Cut']{\g',\GSD,\d'}{
\infer[\infrule RD]{\g'\To\d',D(\riota xA,z_1,w)}{
\deduce{\g'\To\d', w:A[z_1/x]}{\vdots\;\mathcal{D}_{11}}&
\deduce{w:A[y/x],\g'\To\d' ,w:z_1=y}{\vdots\;\mathcal{D}_{12}}
}&
\infer[\infrule LD_2]{D(\riota xA,z_1,w),\GSD}{
\deduce{D(\riota xA,z_1,w),\GSD,w:A[z_2/x]}{\vdots\;\mathcal{D}_{21}}
&
\deduce{w: z_1=z_2,D(\riota xA,z_1,w),\GSD}{\vdots\;\mathcal{D}_{22}}
}
}
}$$

\noindent we transform it into:

$$\scalebox{0.800}{
\infer=[\infrule LC+RC]{\g',\GSD,\d'}{\infer[\infrule Cut_2]{\g',\g',\g',\g,\GSD,\d,\d',\d',\d'}{\infer[\infrule Cut'_1]{\g',\GSD,\d', w:A[z_2/x]}{
\deduce{\g'\To\d',D(\riota xA,z_1,w)}{\vdots\;\mathcal{D}_1}&
\deduce{D(\riota xA,z_1,w),\GSD,w:A[z_2/x]}{\vdots\;\mathcal{D}_{21}}
}&
\deduce{w:A[z_2/x],\g',\g',\GSD,\d',\d'}{\vdots\;\mathcal{D}_{3}}
}
}
}$$

\noindent where $\D_3$ is the following derivation:

$$\scalebox{0.8000}{
\infer[\infrule Cut_4]{w:A[z_2/x],\g',\g',\GSD,\d',\d'}{
\infer[\infrule{[z_2/y]}]{w:A[z_2/x],\g'\To\d' ,w:z_1=z_2}{\deduce{w:A[y/x],\g'\To\d' ,w:z_1=y}{\vdots\;\mathcal{D}_{12}}}
&
\infer[\infrule Cut'_3]{w:z_1=z_2,\g',\GSD,\d'}{
\deduce{\g'\To\d',D(\riota xA,z_1,w)}{\vdots\;\mathcal{D}_1}
&
\deduce{D(\riota xA,z_1,w),w: z_1=z_2,\GSD}{\vdots\;\mathcal{D}_{22}}
}
}
}$$
 \emph{Cut}$'_1$ and \emph{Cut}$'_3$ are  admissible because they have lesser cut-height; \emph{Cut}$_2$ and \emph{Cut}$_4$ because their cut-formula has lower weight.
\end{proof}

\begin{lemma}[Properties of identity]\label{idprop}\
\begin{enumerate}
\item Identity is an equivalence relation in $\mathbf{G3Q\l.L}$;
\item The following sequents are $\mathbf{G3Q\l.L}$-derivable:
\begin{enumerate}
\item \qquad$\To w: x=x$
\item \qquad$w:z=y,w:A[z/x]\To w:A[y/x]$
\item \qquad $w:z=y,D(t,x_1,w)[z/x]\To D(t,x_1,w)[y/x]$
\end{enumerate}
\item The following rules of replacement are admissible in $\mathbf{G3Q\l.L}$:\\
$$
\infer[\infrule Repl_1]{w:z=y,w:A[z/x],\GSD}{w:A[y/x],w:z=y,w:A[z/x],\GSD}
$$
$$
\infer[\infrule Repl_2]{w:z=y,D(t,x_1,w)[z/x],\GSD}{D(t,x_1,w)[y/x],w:z=y,D(t,x_1,w)[z/x],\GSD}
$$
\end{enumerate}
\end{lemma}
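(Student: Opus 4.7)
The plan is to prove the three claims in the order 2(a), 1, then 2(b) and 2(c) simultaneously, and finally 3.

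First, $\Rightarrow w:x=x$ (claim 2(a)) is immediate: the initial sequent $w:x=x \Rightarrow w:x=x$ is available since $x=x$ is atomic, and one application of $Ref_=$ read root-first removes the antecedent copy to yield the goal. For claim 1, symmetry $w:a=b \Rightarrow w:b=a$ follows by one instance of $Repl$ with $E \equiv (w:x=a)$ and identity $w:y=z \equiv w:a=b$, whose premise $w:b=a,\, w:a=a,\, w:a=b \Rightarrow w:b=a$ is initial; the resulting sequent $w:a=a,\, w:a=b \Rightarrow w:b=a$ is then closed by one further application of $Ref_=$. Transitivity $w:a=b,\, w:b=c \Rightarrow w:a=c$ comes from a single $Repl$ instance with $E \equiv (w:a=x)$ and identity $w:b=c$, whose premise is again an initial sequent.

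Claims 2(b) and 2(c) are proved by a simultaneous induction on the weight of $A$ and $t$. For 2(b), the base case of an atomic $A$ (whether of shape $P\vec{x}$ or $x_1 = x_2$) is dispatched by one application of $Repl$, with the schematic variables of the rule instantiated so that the rule's identity $w:y=z$ matches our $w:z=y$; the unique premise $w:A[y/x],\, w:A[z/x],\, w:z=y \Rightarrow w:A[y/x]$ is then an initial sequent. The inductive step proceeds, as usual, by applying root-first the rule governing the outermost operator of $A$ and invoking IH on the immediate subformulas, with the freshness conditions on eigenvariables of the quantifier and modal rules normalised via the hp-admissible $\alpha$-conversion (Lemma \ref{alpha}) and substitution (Lemma \ref{subs}). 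The crucial case is $A = \lambda u B.t$: inverting $R\lambda$ and $L\lambda$ generates subgoals of the shape handled by IH on $B$ (part 2(b)) and by IH on $t$ (part 2(c)). Claim 2(c) splits on the shape of $t$: if $t$ is a variable, the denotation formula is atomic and $Repl$ (or a trivial initial sequent) closes the branch; if $t = \riota u A$, we decompose via $RD$, $LD_1$, $LD_2$ and close each subgoal either by IH on $A$ or by part 2(b) applied to the identity atoms that appear in the generated premises.

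Finally, claim 3 follows from 2(b) and 2(c) by one application of admissible Cut (Theorem \ref{cut}) on $w:A[y/x]$ (respectively $D(t,x_1,w)[y/x]$), followed by hp-admissible contraction (Theorem \ref{cont}) to merge the duplicated copies of $w:z=y$ and $w:A[z/x]$ (respectively $D(t,x_1,w)[z/x]$). The main obstacle I anticipate is the $\lambda$-case of the simultaneous induction for 2(b)/2(c): the two-premise rules $R\lambda$ and $LD_2$, combined with the need to substitute inside both the body $B$ and the associated term $t$ and to repeatedly rename bound variables, generate a branching tree of subgoals whose closure requires a careful interleaving of both limbs of the induction.
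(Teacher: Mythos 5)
Your proposal is correct and follows essentially the same route as the paper: claims 1 and 2(a) by direct applications of $Repl$ and $Ref_=$ (which the paper leaves to the reader), claims 2(b) and 2(c) by a simultaneous induction on weight whose only delicate cases are $A\equiv\l x_1B.t$ and $t\equiv\riota x_2B$, handled by root-first applications of $L\l$/$R\l$ and $RD$/$LD_1$/$LD_2$ with cross-appeals to the two induction hypotheses, and claim 3 from claim 2 via admissible \emph{Cut} (resp.\ \emph{Cut}$'$) plus contraction. The one detail you elide --- that the $LD_2$ branch needs the identity atom flipped via symmetry before the inductive hypothesis applies --- is covered by your choice to establish claim 1 first, exactly as the paper invokes \emph{Sym}$_=$ from item 1.
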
 
\begin{proof}
The proofs of items 1 and 2(a) are left to the reader.

We prove cases 2(b) and 2(c) by simultaneous induction on the weight of the `replacement formula'. All cases but that of 2(b) with  $A$  of shape $\l x_1B.t$ and that of 2(c) with $t$ of shape $\riota x_2B$ are left to the reader.

In the first case, assuming, w.l.o.g., $x_1\not\in \{x,y,z\}$, we have:

\noindent$$\scalebox{0.8000}{
\infer[\infrule L\l]{w:z=y,w:\l x_1B[z/x].t[z/x]\To w:\l x_1B[y/x].t[y/x]}{\infer[\infrule R\l]{D(t[z/x],x_2,w),w:B([z/x])[x_2/x_1],w:z=y\To w:\l x_1B[y/x].t[y/x]}{
\infer[\infrule IH_{2(c)}]{D(t[z/x],x_2,w),w:z=y\To D(t[y/x],x_2,w)}{}&
\infer[\star]{w:(B[z/x])[x_2/x_1],w:z=y\To w:(B[y/x])[x_2/x_1]}{\infer[\infrule IH_{2(b)}]{w:(B[x_2/x_1])[z/x],w:z=y\To w:(B[x_2/x_1])[y/x]}{}}}}
}$$
Where the step by  $\star$ is a rewriting that is feasible because $\{x_1,x_2\}\cap\{x,y,z\}=\varnothing$, and the step by $IH_{2(c)/(b)}$ is by induction on case (c)/(b) of the lemma, respectively.

In the second case, assuming $\{x_1,x_2\}\cap\{ x,y,z\}=\varnothing$, we have:

$$
\infer[\infrule RD]{w:z=y,D(\riota x_2(B[z/x]),x_1[z/x],w)\To D(\riota x_2(B[y/x]),x_1[y/x],w)}{
\infer[\infrule LD_1]{w:z=y,D(\riota x_2(B[z/x]),x_1[z/x],w)\To w:(B[y/x])[x_1[y/x]/x_2]}{\infer[\infrule \star]{w:z=y,w:(B[z/x])[x_1[z/x]/x_2]\To w:(B[y/x])[x_1[y/x]/x_2]}{
\infer[\infrule IH_{2(b)}]{w:z=y,w: (B[x_1/x_2])[z/x]\To w: (B[x_1/x_2])[y/x]}{}}}
&
\deduce{\vdots\;\mathcal{D}_2}{}
}
$$

\noindent where the derivation $\mathcal{D}_2$ is as follows:

$$\scalebox{0.74000}{
\infer[\infrule LD_2]{w:B[x_3/x_1[y/x]],w:z=y,D(\riota x_2(B[z/x]),x_1[z/x],w)\To w:x_1[y/x]=x_3}{
\infer[\infrule\star]{w:B[x_3/x_1[y/x]],w:z=y\To w:B[x_3/x_1[z/x]]}{
\infer[\infrule Sym_=]{w:(B[x_3/x_1])[y/x],w:z=y\To w:(B[x_3/x_1])[z/x]}{
\infer[\infrule IH_{2(b)}]{w:(B[x_3/x_1])[y/x],w:y=z\To w:(B[x_3/x_1])[z/x]}{}}}&
\infer[\infrule \star]{w:x_1[z/x]=x_3,w:z=y\To w:x_1[y/x]=x_3}{
\infer[\infrule IH_{2(b)}]{w:(x_1=x_3)[z/x],w:z=y\To w:(x_1=x_3)[y/x]}{}}}
}$$

\noindent Where all the steps marked with $\star$ are syntactic rewritings that are feasible because $\{x_1,x_2,x_3\}\cap\{x,z,y\}=\varnothing$, and the admissibility of \emph{Sym}$_=$ follows from Lemma \ref{idprop}.1.

Finally, item (3) follows from item (2) thanks to the admissibility of \emph{Cut} and \emph{Contraction}, as it is shown by the following derivation for \emph{Repl}$_1$ (the case of \emph{Repl}$_2$ uses \emph{Cut}$'$ and Lemma \ref{idprop}.2(c), and the proof  proceeds in the same way):

$$\scalebox{0.9600}{
\infer[\infrule LC]{w:z=y,w:A[z/x],\GSD}{
\infer[\infrule Cut]{w:z=y,w:z=y,w:A[z/y], w:A[z/x],\GSD}{
\infer[\infrule{\ref{idprop}.2(b)}]{w:z=y,w:A[z/x]\To w:A[y/x] }{}
&
w:A[y/x],w:z=y,w:A[z/x],\GSD
}}
}$$
\end{proof}

\begin{example} If $y$ is new to $z$ and to $A$, then the  $\L^\l$-formula:
$$\l x_1(x_1=z).\riota x A\to(A[z/x]\wedge(A[y/x]\to y=z\;)$$  is {\bf G3Q$\l$.L}-derivable (see \cite[Section 12.5]{FM98} for a discussion of this formula).

$$
\infer[\infrule L\l]{w:\l x_1(x_1=z).\riota x A\To w:A[z/x]}{\infer[\infrule LD_1]{D(\riota xA,y,w),w:y=z\To w:A[z/x]}{
\infer[\infrule Repl_1]{w:A[y/x],D(\riota xA,y,w),w:y=z\To w:A[z/x]}{\infer[\infrule{\text{Lemma }\ref{ax}.1}]{w:A[z/x],w:A[y/x],D(\riota xA,y,w),w:y=z\To w:A[z/x]}{}}}}$$

$$
\infer[\infrule L\l]{w:\l x_1(x_1=z).\riota x A\To w:A[y/x]\to y=z}{
\infer[\infrule R\to]{D(\riota xA,a,w),w:a=z\To w:A[y/x]\to y=z}{
\infer[\infrule LD_2]{w:A[y/x],D(\riota xA,a,w),w:a=z\To w:y=z}{
\infer[\infrule{\ref{ax}.1}]{w:A[y/x],\dots\To w:y=z,w:A[y/x]}{}&
\infer[\infrule{\ref{ax}.1}]{w:y=z,w:A[y/x],\dots\To w:y=z}{}}}}$$
\end{example}

%
%
%

\section{Soundness and completeness}\label{sec:characterization}
\subsection{Soundness}
\begin{definition}\label{valid}
Given a model $\M=<\W,\R,\D,\V>$, let $f: LAB\cup VAR\longrightarrow \W\cup D_\W$ be a function mapping labels to worlds of the model and mapping variables to objects of the union of the domains of the model. We say that:

\noindent \begin{tabular}{lll}\noalign{\smallskip}
$\M$ satisfies $w:A$ under $f$&iff& $f\models_{f(w)}^\M A$\\\noalign{\smallskip}
$\M$ satisfies $x\in w$ under $f$&iff&$f(x)\in D_{f(w)}$\\\noalign{\smallskip}
$\M$ satisfies $w\RR v$ under $f$&iff& $f(w)\R f(v)$\\\noalign{\smallskip}
$\M$ sat. $D(t,x,w)$ under $f$&iff& $\left \{
\begin{array}{ll}
\forall o\in D_\W(f^{y\tr o}\models^\M_{f(w)}A \textnormal { iff }o=f(x))& \mbox{if $t\equiv \riota yA$}; \\
f(t)=f(x)& \mbox{if $t\equiv y$}; \\
\end{array} \right.  $\\\noalign{\smallskip}

\end{tabular}

\noindent Given a sequent ${}\GSD$ we say that it is \emph{{\bf Q$\l$.L}-valid} iff for  every pair $\M,f$ where $\M$ is a model for {\bf Q$\l$.L}, if $\M$ satisfies under $f$ all formulas in $\g$ then $\M$ satisfies under $f$ some formula in $\d$.
\end{definition}
\begin{theorem}[Soundness]\label{soundness} If a sequent ${}\GSD$ is {\bf G3Q$\l$.L}-derivable, then it is {\bf Q$\l$.L}-valid.
\end{theorem}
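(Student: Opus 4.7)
The plan is to proceed by induction on the height $n$ of the {\bf G3Q$\lambda$.L}-derivation of $\GSD$, following the template established for the propositional modal case in \cite[Thm.~12.11]{NP11}. The base case covers the three kinds of axiomatic leaves: $w:p,\GSD,w:p$ with $p$ atomic, $D(y,x,w),\GSD,D(y,x,w)$, and the conclusion of $L\bot$; all three are immediate, the last because $w:\bot$ is satisfied by no $f$. For the inductive step one verifies that every rule preserves {\bf Q$\lambda$.L}-validity. The propositional, first-order and modal rules, together with the non-logical geometric rules of Table \ref{nonlogicalQK}, are  as in \cite[Thm.~12.11]{NP11}; so I would concentrate on the rules of Table \ref{rulesQKld}.

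For the identity rules, $Ref_=$ is sound because $w:x=x$ is satisfied by every $f$; $RigVar$ is sound because $\sigma(y)=\sigma(z)$ does not depend on the world parameter; and $Repl$ is sound because substitution of equals preserves satisfaction of atomic formulas (including $D(\cdot,\cdot,\cdot)$, $x_i\in w$ and $w:p$). For $L\lambda$, suppose $\M,f$ witness failure of the conclusion $w:\lambda xB.t,\GSD$. Then $\V^f_{f(w)}(t)$ is defined, say equal to $o$, and $f^{x\tr o}\models^\M_{f(w)}B$; since $z$ is fresh, the variant $f'=f^{z\tr o}$ still makes all premises of the rule satisfied in the antecedent (because $\M$ satisfies $D(t,z,w)$ and $w:B[z/x]$ under $f'$), so by IH $f'$ satisfies some formula of $\d$; by freshness of $z$ this formula involves only labels and variables on which $f$ and $f'$ agree, giving a contradiction. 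For $R\lambda$, if $\M,f$ satisfies $\g$ but nothing in $\d,w:\lambda xB.t$, the IH applied to the two premisses yields respectively that $f$ satisfies $D(t,y,w)$, hence $\V^f_{f(w)}(t)=f(y)$, and that $f$ satisfies $w:B[y/x]$, i.e.\ $f^{x\tr f(y)}\models^\M_{f(w)}B$; together these force $\M$ to satisfy $w:\lambda xB.t$ under $f$, a contradiction.

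For the denotation rules, $DenVar$ and $DenId$ are direct: $D(x,x,w)$ is always satisfied, and the clause for $D(y,x,w)$ with $y$ a variable says exactly $f(y)=f(x)$, so it entails satisfaction of $w:y=x$. Rule $LD_1$ uses that if $\M$ satisfies $D(\riota x_1A,x_2,w)$ under $f$ then, instantiating the universally quantified object with $f(x_2)$, we obtain $f^{x_1\tr f(x_2)}\models^\M_{f(w)}A$, i.e.\ satisfaction of $w:A[x_2/x_1]$. Rule $LD_2$ uses the converse direction of the same clause: any $o$ with $f^{x_1\tr o}\models^\M_{f(w)}A$ must equal $f(x_2)$; hence if the first premise forces (via IH) satisfaction of $w:A[y/x_1]$, then $f(y)=f(x_2)$, i.e.\ $w:x_2=y$ is satisfied, and the IH on the second premise closes the case.

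The main obstacle is $RD$, which encodes the unique existence content of the denotation clause and is the only place where both the existence and the uniqueness pieces interact with a fresh variable. The plan is: assume $\M,f$ satisfies $\g$ but fails to satisfy $\d$ and to satisfy $D(\riota x_1A,x_2,w)$. By IH on the first premise, $f^{x_1\tr f(x_2)}\models^\M_{f(w)}A$. Failure of the denotation formula must therefore be caused by non-uniqueness: there exists $o\in D_\W$ with $o\neq f(x_2)$ and $f^{x_1\tr o}\models^\M_{f(w)}A$. Using the freshness of $z$, put $f'=f^{z\tr o}$; then $\M$ satisfies $w:A[z/x_1]$ under $f'$, so by IH on the second premise $\M$ satisfies $w:x_2=z$ under $f'$, giving $f(x_2)=o$, a contradiction. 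Once this case is settled, the soundness theorem follows by collecting all rule cases into the inductive step.
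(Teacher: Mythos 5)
Your proposal is correct and follows essentially the same route as the paper: induction on the height of the derivation, importing the propositional, quantifier, modal and geometric cases from \cite[Chap.~12]{NP11}, and verifying the rules of Table \ref{rulesQKld} by the same key moves --- reassigning the fresh variable to the denoted object for $L\l$, reading off the denotation clause for $LD_1$/$LD_2$, and using the \emph{eigenvariable} of $RD$ to establish uniqueness. The only differences are presentational (you phrase several cases contrapositively, whereas the paper argues directly), so nothing of substance separates the two arguments.
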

\begin{proof}
The proof is by induction on the height of the {\bf G3Q$\l$.L}-derivation of ${}\GSD$. The base case holds since $\g$ and $\d$ have one formula in common, and it is easy to see that the propositional rules, the rules for $\forall$, and the rules for $\Box$ preserve validity on every model, see \cite[Thm. 12.13]{NP11}.

For rule $L\l$, let the last step of $\D$ be:
$$
\infer[\infrule L\l]{{}w:\l x.A.t,\GSD}{D(t,y,w),{}w:A[y/x],\GSD}
$$
with $y$ not free in $w:\l xA.t,\g,\d$. Suppose that $\M$ satisfies under $f$ all formulas in $\g$ and the formula $w:\l x.A.t$. We have to prove that $\M$ satisfies under $f$ also some formula in $\d$. Since $f\models_{f(w)}^\M \l x.A.t$, we know that, in $f(w)$, the term $t$ denotes some object $o\in D_\W$ and that $f^{y\tr o}\models_{f(w)}^\M A[y/x]$, where $y$ does not occur in $\g,A$. This implies that $\M$ satisfies under $f^{y\tr o}$ all formulas in $D(t,y,w),{}w:A[y/x],\g$. Thus, by IH, $\M$ satisfies under $f^{y\tr o}$  some formula in $\d$. Since $y$ does not occur in $\d$, we conclude that $\M$ satisfies under $f$ some formula in $\d$.

For rule $R\l$, let the last step of $\D$ be:
$$
\infer[\infrule R\l]{{}\GSD,w:\l x.A.t}{\GSD,w:\l xA.t,D(t,y,w)\quad&\GSD,w:\l x.A.t,w:A[y/x]}
$$

\noindent We consider an arbitrary pair $\M,f$ satisfying all formulas in $\g$. By IH we know that if $\M,f$ does not   satisfy some formula  in $\d,w:\l x.A.t$, it satisfies both $D(t,y,w)$ and $\l xA[y/x]$. If  $\M,f$ satisfies some formula in $\d,w:\l x.A.t$ there is nothing to prove. Else, $\M$ satisfies under $f$  the formulas $w:A[y/x]$ and $D(t,y,w)$, in this case it is easy to see that $\M$  satisfies under $f$ also $\l x.A.t$.

For rule $LD_1$, let the last step of $\D$ be:
$$
\infer[\infrule LD_1]{D(\riota x_1A,x_2,w),\GSD}{w:A[x_2/x_1],D(\riota x_1A,x_2,w),\GSD}$$

Let us consider a pair $\M,f$ that satisfies all formulas in $\g$ and $D(\riota x_1A,x_2,w)$. The latter means that
\begin{equation}\label{riotafact}
\forall o\in D_\W(f^{x_1\tr o}\models_{f(w)}^\M A\textnormal{ iff } o=f(x_2)\,)
\end{equation}
Hence, we know that $\M,f$ satisfies $w:A[x_2/x_1]$ and, by IH, we conclude that it satisfies also some formula in $\d$.

For rule $LD_2$, we proceed as for $LD_1$. If the last step of $\D$ is:
$$\infer[\infrule LD_2]{D(\riota x_1A,x_2,w)\GSD}{D(\riota x_1A,x_2,w),\GSD,w:A[y/x_1]\quad&w:x_2=y,D(\riota x_1A,x_2,w),\GSD} $$
and if $\M,f$ is a pair that satisfies  $D(\riota x_1A,x_2,w)$, then we know that (\ref{riotafact}) holds. Assume that $\M,f$ satisfies also all formulas in $\g$. By IH, the left premise entails that if that pair does not satisfy some formula in $\d$ then it satisfies $w:A[y/x_1]$ and, hence $f^{x\tr f(y)}\models_{f(w)}^\M A$. But then (\ref{riotafact}) entails that $f(y)=f(x_2)$ -- i.e., $\M,f$ satisfies also $w:x_2=y$ . By induction on the right premiss we conclude that the pair under consideration satisfies some formula in $\d$. 

For rule $RD$, let the last step of $\D$ be:
$$
\infer[\infrule{RD,\, z\text{ fresh}}]{\GSD,D(\riota x_1A,x_2,w)}{\GSD,w: A[x_2/x_1]\qquad&w:A[z/x_1],\GSD,w: x_2=z}$$
Let $\M,f$ satisfy all formulas in $\g$. If it also satisfies some formula in $\d$ we are done. Else, by IH, we know it satisfies $w
:A[x_2/x_1]$. Let $o$ be any object such that $f^{x_1\tr o}\models_{f(w)}^\M A$. We consider a pair $\M,f'$ where $\M$ is as before and $f'$ is like $f$ save that $f'(z)=o$. Thanks to the variable condition on $z$, this implies that $\M,f'$ satisfies $w:x_2=z$. Hence, $\M,f'$ satisfies $D(\riota x_1A,x_2,w)$. We conclude the same holds for $\M,f$.

The rules for identity  preserves validity on every model: the proof is standard for rules $Ref_=$ and $Repl$. For rule $RigVar$ it depends on the fact that variables are rigid designators. Also the rules $DenVar$ and $DenId$ preserves validity on every model. For $DenVar$ this depends on the fact that variables denote in every world. For $DenId$, this holds because the fact that  $\M$ satisfies $D(x,y,w)$ under $f$ means that $f(x)=f(y)$. Therefore, $\M$ must also satisfy under $f$ the formula $w:x=y$.

For the proof that each non-logical rule from Table \ref{nonlogicalQK} preserves validity over the appropriate class of frames, we refer the reader to  \cite[Thm. 12.13]{NP11}.
\end{proof}
\subsection{Completeness}
\begin{theorem}[Completeness]  If a  sequent ${}\GSD$   is {\bf Q$\l$.L}-valid,  it  is {\bf G3Q$\l$.L}-derivable.
\end{theorem}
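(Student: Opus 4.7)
The plan is to prove completeness by the standard Negri--Plato countermodel construction: from a failed, saturated root-first proof-search for $\GSD$ we read off a model of {\bf Q$\l$.L} that falsifies the sequent, so that the contrapositive (together with Soundness, Theorem \ref{soundness}) yields completeness.

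First I would define a fair reduction strategy applying every rule of {\bf G3Q$\l$.L} root-first in some systematic schedule, so that every labelled formula and every $D$-atom on the branch is eventually decomposed, every rule with repetition of the principal formula ($L\forall$, $L\Box$, $L\l$, $LD_1$, $LD_2$, $DenVar$, $DenId$, $Repl$, and the non-logical rules of Table \ref{nonlogicalQK}) is instantiated with every label, variable and description that has appeared along the branch, and fresh variables/labels are introduced wherever a rule requires them. If the resulting tree is finite and axiomatic at every leaf, we already have a {\bf G3Q$\l$.L}-derivation. Otherwise, since the tree is finitely branching, K\"onig's lemma gives an open infinite branch $\mathcal B$; let $\g^*,\d^*$ be the unions of the antecedents and consequents along $\mathcal B$. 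By construction no atomic formula and no $D$-atom occurs in both $\g^*$ and $\d^*$.

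From $\mathcal B$ I would build a model $\M=\<\W,\R,\D,\V\>$ with $\W$ the set of labels appearing on $\mathcal B$, $w\R v$ iff $w\RR v\in\g^*$, $D_w=\{[x] : x\in w\in\g^*\}$ where $[x]$ is the equivalence class of $x$ under the congruence $\{(x,y) : w{:}x=y\in\g^*\text{ for some }w\}$ (label-independent by $RigVar$, reflexive by $Ref_=$, symmetric/transitive via $Repl$, and well defined on atomic and on $D$-formulas again via $Repl$), and $\V(P,w)$ read off from the atoms $w{:}Px_1\dots x_n\in\g^*$. The non-logical rules of Table \ref{nonlogicalQK} guarantee that $\R$ and $\D$ satisfy the frame conditions defining {\bf Q.L}. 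I would then prove a Truth Lemma by simultaneous induction on the weight of labelled formulas and of denotation formulas: setting $f(x):=[x]$ and $f(w):=w$, every formula in $\g^*$ is satisfied by $\M$ under $f$ and no formula of $\d^*$ is. The critical new cases are $\l$ and $D(\cdot,\cdot,\cdot)$. For $D(\riota xA,y,w)\in\g^*$, saturation by $LD_1$ gives $w{:}A[y/x]\in\g^*$, and saturation by $LD_2$ with every variable $z$ on the branch forces the implication $w{:}A[z/x]\in\g^*\Rightarrow w{:}y=z\in\g^*$; the inductive hypothesis then identifies $[y]$ as the unique denotation of $\riota xA$ at $w$. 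For $D(\riota xA,y,w)\in\d^*$, rule $RD$ supplies the needed dichotomy: either $w{:}A[y/x]\in\d^*$ (existence of $y$ as denotation fails) or, for some fresh $z$, $w{:}A[z/x]\in\g^*$ and $w{:}y=z\in\d^*$ (uniqueness fails). The $\l$-cases are analogous, using $L\l$ and the two-premiss rule $R\l$ to obtain exactly the semantic disjunction required; $DenVar$ and $DenId$ take care of the $D$-clause for variables.

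The main obstacle will be this Truth Lemma for $D$-formulas: one must verify that the uniqueness clause in the semantics of $\riota xA$ is precisely forced by the saturation of $LD_2$ and $RD$ together with the identity quotient, and that the interplay with $Repl$, $RigVar$, $DenVar$ and $DenId$ yields a canonical assignment for which both directions of the lemma go through uniformly in the weight of terms (the simultaneous induction on term- and formula-weight inherited from the definition of weight is essential here). A secondary technicality is checking that the varying-domain rules $Incr$, $Decr$, $Cons$ and the remaining non-logical rules (in their contracted forms where needed, cf.\ $Eucl^c$) produce a frame with exactly the properties demanded by {\bf Q$\l$.L}, so that $\M$ is indeed a countermodel in the right class.
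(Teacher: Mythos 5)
Your proposal is correct and follows essentially the same route as the paper: a fair root-first proof-search, saturation of an open branch obtained via K\"onig's lemma, a countermodel whose domains consist of equivalence classes of variables under the provable identities (well-defined by $Ref_=$, $Repl$ and $RigVar$), and a truth lemma proved by simultaneous induction on the weight of labelled formulas and denotation formulas, with the $\l$- and $D$-cases discharged exactly as you describe via saturation under $L\l$, $R\l$, $LD_1$, $LD_2$, $RD$, $DenVar$ and $DenId$, and the frame conditions secured by saturation under the non-logical rules. No gaps; this matches the paper's Definitions \ref{saturated}--\ref{mmodel} and Lemma \ref{truth}.
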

\begin{proof}
The proof is organized in four main steps. First, in Def.~\ref{saturated}, we sketch a root-first {\bf G3Q$\l$.L}-proof-search procedure. Second, in Def.~\ref{procedure}, we define the notion of saturation for a branch of a {\bf G3Q$\l$.L}-proof-search tree and, in Proposition \ref{konig}, we show that, for every sequent, a {\bf G3Q$\l$.L}-proof-search either gives us a {\bf G3Q$\l$.L}-derivation of that sequent, or it has a saturated branch. Third, in Def.~\ref{mmodel}, we define a model $\M^\B$ out of a saturated branch $\B$. Finally, in Lemma \ref{truth}, we prove that $\M^\B$ is a model for {\bf Q$\l$.L} that falsifies ${}\GSD$. 
\end{proof}
\begin{definition}\label{saturated} A \emph{{\bf G3Q$\l$.L}-proof-search tree} for a sequent ${}\GSD$ is a tree of sequents generated according to the following inductive procedure. At {\bf step 0} we write the one node tree ${}\GSD$. At {\bf step} $\mathbf{n+1}$, if all leaves  of the tree generated at step $n$ are initial sequents, the procedure ends. Else, we continue the bottom-up construction by applying, to each leaf that is not an initial sequent, each applicable instance of a rule of {\bf G3Q$\l$.L} (by invertibility of the rules, there is no prescribed order in which thee rules  need to be applied) or, if no rule instance is applicable, we copy the leaf on top of itself. For rules \emph{Ref$_=$, Ref$_\W$, Ser, Cons} and \emph{ DenVar}, we consider applicable only instances where, save for \emph{eigenvariables}, all terms and labels  occurring in  the active  formula of that instance already occur in the leaf. See \cite[Thm. 12.14]{NP11} for the details of the inductive procedure (the reader might easily fill  the missing details).
\end{definition}

\begin{definition}[Saturation]\label{procedure} A branch $\mathcal{B}$ of a {\bf G3Q$\l$.L}-proof-search tree for a sequent  is \emph{{\bf Q$\l$L}-saturated} if it satisfies the following conditions, where $\mathbf{\g}$ ($\mathbf{\d}$) is the union of the antecedents (succedents) occurring in that branch,
\begin{enumerate}
\item no $w:p$ occurs in $\mathbf{\g}\cap\mathbf{\d}$;
\item no $D(y,x,w)$ occurs in $\mathbf{\g}\cap\mathbf{\d}$;
\item  $w:\bot $ does not occur in  $\mathbf{\g}$;
\item  if $w: A\wedge B$ is in $ \mathbf{\g}$, then both $w:A$ and $w:B$ are  in $\mathbf{\g}$;
\item  if $w: A\wedge B$ is in  $\mathbf{\d}$, then at least one of $w:A$ and $w:B$ is  in $\mathbf{\d}$;
\item  if $w: A\lor B$ is in $ \mathbf{\g}$, then at least one of $w:A$ and $w:B$ is  in $\mathbf{\g}$;
\item  if $w: A\lor B$ is in  $\mathbf{\d}$, then both $w:A$ and $w:B$ are  in $\mathbf{\d}$;
\item  if $w: A\to B$ is in $ \mathbf{\g}$, then   $w:A$ is in $\mathbf{\d}$ or   $w:B$ is  in $\mathbf{\g}$;
\item  if $w: A\to B$ is in  $\mathbf{\d}$, then  $w:A$ is in $\mathbf{\g}$ and $w:B$ is  in $\mathbf{\d}$;
\item  if both $w: \fa xA$  and $y\in w$ are in $\mathbf{\g}$, then $w:A[y/x]$ is in $\mathbf{\g}$;
\item  if $w: \fa xA$ is in $ \mathbf{\d}$, then, for some $z$,  $w:A[z/x]$ is in $\mathbf{\d}$ and $z\in w$ is  in $\mathbf{\g}$;
\item  if $w: \ex xA$ is in $ \mathbf{\g}$, then, for some $z$, both  $w:A[z/x]$ and  $z\in w$ are   in $\mathbf{\g}$;
\item  if $w: \ex xA$  is in $\mathbf{\d}$   and $y\in w$ is in  $\mathbf{\g}$, then $w:A[y/x]$ is in $\mathbf{\d}$;
\item   if  both $w: \Box A$  and $w\RR v$ are  in $\mathbf{\g}$, then $v:A $ is in $\mathbf{\g}$;
\item if $w:\Box A$ is in $\mathbf{\d}$, then, for some $u$, $u:A$ is in $\mathbf{\d}$ and $w\RR u$ is in $\mathbf{\g}$;
\item if $w:\Diamond A$ is in $\mathbf{\g}$, then, for some $u$, both $u:A$ and  $w\RR u$ are in $\mathbf{\g}$;
\item   if   $w: \Diamond A$ is in $\mathbf{\d}$    and $w\RR v$ is   in $\mathbf{\g}$, then $v:A $ is in $\mathbf{\d}$;
\item if $w:\l x.A.t$ is in $\mathbf{\g}$, then, for some $z$, both $D(t,z,w)$ and $w:A[z/x]$ are in $\mathbf{\g}$;
\item if $w:\l x.A.t$ is in $\mathbf{\d}$, then, for each $y\,(\in\mathcal{B})$, at least one of $D(t,y,w)$ and $w:A[y/x]$ is in $\mathbf{\d}$;
\item if $D(\riota x_1 A,x_2,w)$ is in $\mathbf{\g}$, then   $w:A[x_2/x_1]$ is in $\mathbf{\g}$ and, for each $y\,(\in\mathcal{B})$, either $w:A[y/x_1]$ is in $\mathbf{\d}$  or $w:x_2=y$ is in $\mathbf{\g}$;
\item if $D(\riota x_1 A,x_2,w)$ is in $\mathbf{\d}$, then $w:A[x_2/x_1]$ is in $\mathbf{\d}$ or, for some $z$, both $w:A[z/x_1]$ is in $\mathbf{\g}$ and $w:x_2=z$ is in $\mathbf{\d}$;
\item if the principal formulas of some instance of one of $Ref_=$,  $Repl$,   $RigVar$, $DenVar$, and $DenId$ is in $\mathbf{\g}$, then also the corresponding active formulas are in $\mathbf{\g}$.

\item[23$_R$.] if $R$ is a non-logical rule of {\bf G3Q$\l$.L}, then for each set of  principal formulas of $R$ that are  in $\mathbf{\g}$ also the corresponding active formulas are in $\mathbf{\g}$ (for some \emph{eigenvariable} of that rule, if any).
\end{enumerate}
\end{definition}

\begin{proposition}\label{konig} Let us consider a {\bf G3Q$\l$.L}-proof-search tree for a sequent $\mathcal{S}$, two cases are possible: either the tree is finite or not. If the tree is finite then all of its leaves are initial sequents and  it grows by applying rules of {\bf G3Q$\l$.L}. Hence,  the tree is a  {\bf G3Q$\l$.L}-derivation of $\mathcal{S}$ and, by Theorem \ref{soundness}, $\mathcal{S}$ is {\bf Q$\l$.L}-vaild. Else, $\mathcal{S}$ is not {\bf G3Q$\l$.L}-derivable and,  by K$\ddot{\textrm{o}}$nig's Lemma, the tree has an infinite branch $\B$ that is {\bf Q$\l$.L}-saturated since every applicable rule instance has been applied at some step of the construction of the tree.
\end{proposition}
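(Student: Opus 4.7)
The proof is by case analysis on whether the proof-search tree is finite. Before splitting, I would first observe that the tree is finitely branching: every rule of $\mathbf{G3Q\l.L}$ has at most two premisses, and when no rule is applicable at a leaf the construction copies that leaf on itself, producing a single child. This is the prerequisite for applying K\"onig's Lemma in the second case.

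In the finite case, the termination clause of Definition \ref{saturated} tells us the construction halts only when every leaf is an initial sequent. Since every internal step is an instance of a $\mathbf{G3Q\l.L}$-rule, the resulting finite tree is by definition a $\mathbf{G3Q\l.L}$-derivation of $\mathcal{S}$. Soundness (Theorem \ref{soundness}) then gives $\mathbf{Q\l.L}$-validity of $\mathcal{S}$.

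In the infinite case, $\mathcal{S}$ cannot be $\mathbf{G3Q\l.L}$-derivable, and K\"onig's Lemma applied to the finitely branching infinite tree yields an infinite branch $\mathcal{B}$. The substantive part of the argument is checking that $\mathcal{B}$ meets all 23 conditions of Definition \ref{procedure}. For this I would appeal to the \emph{fairness} built into the proof-search procedure: the inductive construction schedules every applicable instance of every rule on every leaf, so any rule instance that is enabled on some sequent occurring in $\mathcal{B}$ is applied at some later step of $\mathcal{B}$. By height-preserving invertibility (Corollary \ref{inv}) there is no loss in the order of applications, so the active formulas of each applied instance appear in $\mathcal{B}$. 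Checking this clause by clause against Definition \ref{procedure} is routine: the left-logical rules, $L\l$, $LD_1$, $LD_2$, the rules for identity, and the non-logical rules directly contribute their active formulas to $\mathbf{\g}$; the right-logical rules, $R\l$, and $RD$ force a choice among their premisses, but since $\mathcal{B}$ is one particular infinite branch the corresponding disjunctive clauses (e.g.\ clauses 5, 7, 19, 21, 23$_R$) are witnessed on it. The restriction in Definition \ref{saturated} that $Ref_=,Ref_\W,Ser,Cons,DenVar$ be applied only to terms and labels already occurring in the leaf prevents spurious non-termination without blocking saturation, because the required principal formula is already witnessed.

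The main obstacle is precisely this fairness verification: one has to enumerate the countably many triples $\langle\text{sequent, rule instance, position in }\mathcal{B}\rangle$ and show that every such triple is scheduled at some stage of the construction. This is the same bookkeeping argument as in \cite[Thm.~12.14]{NP11}, adapted to include the new rules for $\l$, $D(\cdot)$, and identity; no new conceptual ingredient is required, so I would explicitly appeal to that reference and verify only the new clauses (18--22) in detail.
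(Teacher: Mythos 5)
Your proposal is correct and follows essentially the same argument the paper gives (the paper in fact folds the proof into the statement of Proposition \ref{konig} itself): a case split on finiteness, soundness in the finite case, and K\"onig's Lemma plus fairness of the search procedure in the infinite case. The only additions you make are the explicit observation that the tree is finitely branching and the clause-by-clause fairness check, both of which the paper leaves implicit by deferring to \cite[Thm.~12.14]{NP11}.
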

\begin{proposition}\label{wellid} Let $\mathbf{\g}$ $(\mathbf{\d}$) be the union of the antecedents (succedents) of a {\bf Q$\l$.L}-saturated branch. It is immediate to notice that, by saturation under rule $Ref_=$ and $Repl$ (cf. Def. \ref{procedure}.22), the set of variables $x,y$ such that $w:x=y$ is in $\mathbf{\g}$ is an equivalence class $[x]_w$. Moreover,  by saturation under  $RigVar$, the same equivalence class holds with respect to each label $v$ occurring in $\mathbf{\g},\mathbf{\d}$ (hence we allow ourselves to use  $[x]$ instead of $[x]_w$).

\end{proposition}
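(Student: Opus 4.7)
The plan is to verify two things: (i) that for each label $w$ occurring in $\B$, the relation $x\sim_w y$ defined by ``$w:x=y\in\mathbf{\g}$'' is an equivalence relation on the variables of $\B$, and (ii) that $\sim_w$ is in fact independent of the choice of $w$. All of this will follow directly from saturation under the identity-rules listed in Def.~\ref{procedure}.22, together with the fact that $E$ in rule $Repl$ ranges over atomic formulas of the form $w:p$ (in particular $w:x_1=x_2$).

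For reflexivity I would use saturation under $Ref_=$: since the restricted applicability clause in Def.~\ref{saturated} requires the variable in the active formula to already occur in the leaf, for each variable $x$ and label $w$ occurring in $\B$ we get $w:x=x\in\mathbf{\g}$. For symmetry, starting from $w:x=y\in\mathbf{\g}$, I would invoke saturation under $Repl$ with $E:=(w:a=x)$, principal formula $w:x=y$ (matching $b=x, c=y$), and the reflexive identity $E[x/a]=w:x=x\in\mathbf{\g}$ just obtained; this forces $E[y/a]=w:y=x\in\mathbf{\g}$. For transitivity, given $w:x=y,\,w:y=z\in\mathbf{\g}$, the same rule with $E:=(w:x=a)$ and principal formula $w:y=z$ yields $w:x=z\in\mathbf{\g}$. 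These three steps show that $\sim_w$ is an equivalence relation, so the set $[x]_w=\{y:w:x=y\in\mathbf{\g}\}$ is an equivalence class.

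For label-independence I would use saturation under $RigVar$: from $w:x=y\in\mathbf{\g}$ and any label $v$ already occurring in $\mathbf{\g},\mathbf{\d}$, the rule $RigVar$ forces $v:x=y\in\mathbf{\g}$. Running this argument in both directions shows $[x]_w=[x]_v$ for every pair of labels $w,v$ in $\B$, so the subscript can be dropped and we may write $[x]$ unambiguously.

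None of these steps involves a real obstacle: each one is a single unfolding of the corresponding saturation clause. The only mildly delicate point to check is that the atomic formula schema $E$ in $Repl$ does cover identity atoms $w:x_1=x_2$ (which it does, since these are of the form $w:p$ with $p$ atomic), so that the symmetry and transitivity arguments are legitimate instances of the saturation clause for $Repl$ and not applications of $Repl$ to non-atomic replacement formulas.
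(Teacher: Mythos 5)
Your proposal is correct and follows exactly the argument the paper intends: the paper states Proposition~\ref{wellid} as ``immediate'' with only a pointer to saturation clause~22, and your unfolding of that clause (reflexivity from $Ref_=$ under the occurrence restriction, symmetry and transitivity from $Repl$ instantiated with identity atoms as the replacement formula $E$, and label-independence from $RigVar$) is precisely the intended justification. Your side remark that $E$ in $Repl$ must be allowed to be an identity atom $w{:}x_1=x_2$ is also the right point to check, and it holds since identities are atomic formulas of $\mathscr{L}^\l$.
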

\begin{definition}\label{mmodel} Let $\B$ be a saturated branch of a  {\bf G3Q$\l$.L}-proof-search tree for a sequent, and let $\mathbf{\g}$ be the union of its antecedents. The model $\M^\B=<\W^\B,\R^\B,\D^\B,\V^\B>$ is defined from $\B$ as follows ($\M^\B$ is well-defined thanks to Definition \ref{procedure}.1--3, Lemma \ref{ax}.1--2, and  Proposition \ref{wellid}):
\begin{itemize}
\item $\W^\B$ is the set of all labels occurring in $\B$;
\item $\R^\B$ is such that $w\R^\B v$ iff $w\RR v$ occurs in $\B$;
\item $\D^\B$ is such that, for each $w\in \W^\B$, $D_w$ is the set containing, for each variable $x$ such that $x\in w$ occurs in $\B$, the equivalence class $[x]$ of all  $x,y$ such that $w:x=y$ occurs in $\B$;
\item $\V^\B$ is  such that, for every predicate $P^n\in\mathcal{S}^\l$, $V(P^n,w)$ is the set of all $n$-tuples of equivalence classes of variables $<[x_1],\dots,[x_n]>$ such that  $w:Px_1,\dots,x_n$ occurs in $\mathbf{\g}$.
\end{itemize}
\end{definition}
\begin{lemma}\label{truth} If $\M^\B$ is the model defined from a saturated branch $\B$ of a {\bf G3Q$\l$.L}-proof-search tree for a sequent ${}\GSD$ and $\s$ is the assignment defined by $\s(x)=[x]$, then, for each labelled formula $w:A$ occurring in $\B$ and for each denotation formula $D(t,x,w)$ occurring in $\B$, 
\begin{enumerate}
\item $\s\models_w^{\M^\B} A\qquad\qquad\text{iff}\qquad w:A\textnormal{ occurs in }\mathbf{\g}$
\item $\V^{\B,\s}_w(t)=[x]\;\qquad\textnormal{iff}\qquad D(t,x,w)\textnormal{ occurs in }\mathbf{\g}$ 
\item $\M^\B$ is a model for {\bf Q$\l$.L}.
\end{enumerate}
\end{lemma}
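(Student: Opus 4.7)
The plan is to prove items 1 and 2 by simultaneous induction on the weight of the formula $w:A$ and of the term $t$, and then to read off item 3 from saturation under the non-logical rules. For item 2 the base case is $t\equiv x$: by Definition \ref{mmodel} we have $\V_w^{\B,\s}(x)=\s(x)=[x]$, and $D(x,y,w)$ lies in $\g$ iff (by saturation under $DenVar$ and $DenId$, Def.~\ref{procedure}.22, together with Prop.~\ref{wellid}) $[x]=[y]$. For item 1 the base cases are $w:Px_1\ldots x_n$, $w:x{=}y$, and $w:\bot$: the first is immediate from the definition of $\V^\B$ combined with saturation under $Repl$, the second from the construction of the equivalence classes in Prop.~\ref{wellid}, and the third from Def.~\ref{procedure}.3 together with the fact that $\bot$ is never satisfied.

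The inductive cases for the propositional connectives, the restricted first-order quantifiers (using the domains $D^\B_w$ built from $x\in w$ atoms) and the modalities (using $w\RR v$ atoms) follow the standard pattern from saturation conditions 4--17, as in \cite[Thm.~12.14]{NP11}. The genuinely new cases are those for $\l$ and for $D(\riota x_1A,x_2,w)$. If $w:\l xA.t\in\g$, saturation 18 supplies a variable $z$ with $D(t,z,w),w:A[z/x]\in\g$; by IH on item 2 we get $\V^{\B,\s}_w(t)=[z]$, and by IH on item 1 we get $\s^{x\tr[z]}\models_w^{\M^\B}A$, whence $\s\models_w^{\M^\B}\l xA.t$. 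Conversely, if $w:\l xA.t\in\d$, saturation 19 yields, for every variable $y$ occurring in $\B$, that either $D(t,y,w)\in\d$ or $w:A[y/x]\in\d$; since every element of $D^\B_\W$ is $[y]$ for some such $y$, applying IH we conclude that no candidate object can simultaneously be the value of $t$ and satisfy $A$, so $\s\not\models_w^{\M^\B}\l xA.t$. The two directions for $D(\riota x_1A,x_2,w)$ are handled analogously by conditions 20 and 21: from 20 we extract both that $\s^{x_1\tr[x_2]}\models_w^{\M^\B}A$ and the uniqueness clause (any $[y]$ satisfying $A$ has $w:x_2{=}y\in\g$, hence $[x_2]=[y]$); from 21, one of these two facts is blocked, so either $[x_2]$ fails to satisfy $A$ or some distinct $[z]$ does, giving $\V^{\B,\s}_w(\riota x_1A)\neq[x_2]$.

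For item 3, the frame conditions defining the class associated with $\mathbf{Q.L}$ (reflexivity, transitivity, Euclideanness, seriality, and the increasing/decreasing/constant domain conditions) follow directly from saturation under the corresponding non-logical rules (Def.~\ref{procedure}.23$_R$), exactly as in \cite[Thm.~12.14]{NP11}; nonemptiness of $\W^\B$ holds because $\GSD$ already contributes a label, and saturation under $RigVar$ is what makes the equivalence class $[x]$ well defined uniformly across worlds (Prop.~\ref{wellid}). The main delicate point, and the one I expect to require the most care, is the `only if' direction for $\l$-in-$\d$ and for $RD$: the `parametric' saturation conditions 19 and 21 quantify over variables $y$ occurring in $\B$, and we must verify that every object of $D^\B_\W$ is indeed $[y]$ for some such $y$ — which is immediate by the very construction of $\D^\B$, but is the hinge on which the correct reading of the unique-denotation clause depends. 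Once items 1--3 are in place, the lemma yields $\M^\B,\s\not\models\GSD$, contradicting $\mathbf{Q}\l.\mathbf{L}$-validity and thereby completing the completeness proof.
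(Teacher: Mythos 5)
Your proposal is correct and follows essentially the same route as the paper's own proof: a simultaneous induction on weight for items 1 and 2, with the base case for variable denotation handled via \emph{DenVar}/\emph{DenId}, the $\lambda$ and $\riota$ cases handled via saturation conditions 18--21 (including the key observation that every element of $D^\B_\W$ is $[y]$ for some $y$ occurring in $\B$), and item 3 read off from condition 23$_R$. The only cosmetic difference is that the paper splits the $\lambda$-in-succedent case according to whether some $D(t,y,w)$ occurs in the antecedent (invoking Def.~\ref{procedure}.2 and Lemma \ref{ax}.2 to keep the two sides apart), whereas you argue uniformly; the substance is the same.
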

\begin{proof}
The proof of {\bf claims 1 and 2} is by simultaneous   induction on the weight of $w:A$ and of $D(t,x,w)$, respectively. 

We start with {\bf claim 1}. The base case holds thanks to the definition of $\V^\B$, and the inductive cases depends on the construction of $\M^\B$ and on properties 4--19 of the definition of saturated branches \ref{procedure}.
 To illustrate, suppose $w:A\equiv w:\l xB.t$. 
 
 If $w:A$ occurs in $\mathbf{\g}$, then, by Def. \ref{procedure}.18, for some $z$, $D(t,z,w)$ and $w:B[z/x]$ are in $\mathbf{\g}$. By induction on claim 2, this implies  that $\V^{\B,\s}_w(t)=[z]$ and, by induction on claim 1, it also  implies  that $\s^{x\tr[z]}\models_w^{\M^\B} B$. Thus, $\s\models_w^{\M^\B} \l xB.t$.
 
 Suppose that  $w:\l xB.t$ is in $\mathbf{\d}$. If  there is no variable $y$ such that $D(t,y,w)$ is in $\mathbf{\g}$, then, by (the latter fact and by) induction on claim 2,  we immediately have that  $\V^{\B,s}_w(t)$ is undefined. Thus,  $\s\not\models_w^{\M^\B} \l xB.t$. Else,  given Def. \ref{procedure}.2 and Lemma \ref{ax}.2, Def. \ref{procedure}.19  entails that, for each $y$ such that $D(t,y,w)$ is in $\mathbf{\g}$,  $w:B[y/x]$ is in $\mathbf{\d}$. By induction on claim 2, we have that that $\V^{\B,\s}_w(t)=[y]$ and, thanks to induction on claim 1 (and \ref{ax}.1, \ref{procedure}.1),  $\s^{x\tr[y]}\not\models_w^{\M^\B} B$. We conclude that $\s\not\models_w^{\M^\B}\l xB.t$.
 
 Next, we consider {\bf Case 2}. In the base case $t$ is a variable $y$ and, by construction of $\D^\B$, we know that $\V^{\B,\s}_w(y)=[x]$ iff $w:x=y$ occurs in $\mathbf{\g}$. The right-to left implication  holds thanks to saturation under rule \emph{DenId}, and the left-to-right one  thanks to saturation under rules \emph{DenVar} and \emph{Repl} (it is enough to consider an instance of \emph{Repl} with principal formulas $w:x=y$ and $D(z,x,w)[x/z]$).
 
If, instead,  $t\equiv\riota yB$, we make use properties 20 and 21 of the definition of saturated branch to prove that, whenever $D(\riota yB, x,w)$ is in $\mathbf{\g}$, $[x]$ is the only member of $D_{\W^\B}$ such that \mbox{$\s^{y\tr[x]}\models^{\M^\B}_w B$.}
 
 If $D(\riota y B,x,w)$ occurs in $\mathbf{\g}$, then Def. \ref{procedure}.20 entails that (i)  $w:B[x/y]$ is in $\mathbf{\g}$ and  that (ii), for each $z\in\B$, if $w:B[z/y]$ occurs in $\mathbf{\g}$ then also $w:x=z$ occurs in $\mathbf{\g}$. By induction on claim 1 and by construction of $D_{\W^\B}$, fact (i) implies that $[x]$ is such that $\s^{y\tr[x]}\models^{\M^\B}_w B$, and fact (ii) implies that for each $[z]\in D_{\W^\B}$, $\s^{y\tr[z]}\models^{\M^\B}_w $ only if $[z]=[x]$. Thus, we conclude that $\V^{\B,\s}_w(\riota yB)=[x]$.
 
 If $D(\riota yB,x,w)$ occurs in $\mathbf{\d}$, then either $w:B[x/y]$ is in $\mathbf{\d}$ or, for some $z\in\B$, $w:A[z/y]$ is in $\mathbf{\g}$ and $w:x=z$ is in $\mathbf{\d}$. In the first case $[x]$ is such that $\s^{y\tr[x]}\not\models^{\M^\B}_w B$; in the latter there is  $[z]\in D_{\W^\B}$ such that   $\s^{y\tr[z]}\models^{\M^\B}_w B$ and $[z]\neq [x]$. In both cases we can conclude that $\V^{\B,\s}_w(\riota yB)\neq[x]$.

{\bf Claim 3} holds thanks to property $23_R$ of saturated branch: if a non-logical rule $R$ is in {\bf G3Q$\l$.L}, then we have to show that $\M^\B$ satisfies the semantic property corresponding to $R$. This holds by construction of $\M^\B$ since $\B$ is saturated with respect to rule $R$. For example, for rule \emph{Decr}, we have to prove that if $w\R^\B v$ then $D_v\subseteq D_w$.  By construction of $\R^\B$, we know that $w\R^\B v$ implies that $w\RR v$ occurs in $\mathbf{\g}$. Let us now consider a generic $[x]\in \D^\B$. If $[x]\in D_v$ then $x\in v$ occurs in $\mathbf{\g}$. By saturation under rule \emph{Decr}, we have that $x\in w$ and, hence, $[x]\in D_w$. We conclude that $\M^\B$ is based on a frame with decreasing domain.
\end{proof}

\section{Conclusion}\label{conclusion}
We have introduced labelled sequent calculi that characterize the QMLs with definite descriptions introduced in \cite{FM98}, and we have studied their structural properties. To the best of our knowledge, this is the first proof-theoretic study of these logics. In \cite{FM98} prefixed tableaux for these logics have been considered, but there is no study of their structural properties. Notice that, even if we have considered only the $Q\l$-extensions of propositional modal logics {\bf L}  in the cube of normal modalities, the present approach can be extended, in a modular way, to the $Q\l$-extensions of any propositional modal logic whose class of frames is first-order definable (by applying, if needed, the \emph{geometrisation technique} introduced in \cite{DN15}). For example, we can introduce a calculus characterizing validity in the class of all constant domain frames  satisfying  \emph{confluence}: $$\forall w,v,u\in\W(w\R v\wedge w\R u\to \exists w'\in\W(v\R w'\wedge u\R w'))$$ From \cite{C95}, we know that confluence corresponds to  Geach's axiom  $2:=\Diamond\Box A\to\Box\Diamond A$ and that the quantified modal axiomatic system $Q.2\oplus BF$ is incomplete with respect to the class of all confluent constant domain frames (i.e., the logic {\bf Q$\l$.K2$\oplus$UI}). Nevertheless, confluence is a geometric property, and it can be expressed in labelled calculi by the rule:
$$
\infer[\infrule{ Conf,\, w'\text{ fresh}}]{w\RR v,w\RR u,\GSD}{v\RR w',u\RR w',w\RR v,w\RR u,\GSD}
$$
\noindent It can be proved that the labelled calculus {\bf G3Q$\l$.K}$+\{$\emph{Conf, Cons}$\}$ is sound and complete with respect to the class of confluent constant domain frames.

If, instead, we consider the  logic {\bf Q$\l$.S4.M$\oplus$UI} -- i.e. the set of $\L^\l$-formulas that are valid in the class of constant domain frames that are reflexive, transitive, and \emph{final}:
\begin{equation}\label{final}
\forall w\in \W\exists v\in \W(w\R v\wedge \forall u\in \W(v\R u\to v=u))
\end{equation}
then we have the problem that finality is not a geometric property because of the universal quantifier in the scope of an existential one. Nevertheless, as it is shown in \cite{DN15}, we can transform it into a set of geometric conditions by extending the language with a fresh one place predicate constant $Fin$ that replaces $\forall u\in \W(v\R u\to v=u))$ in (\ref{final}) (thus making it geometric) and that is axiomatized by the following geometric condition:
$$
\forall u\in \W(Fin (v)\wedge v\RR u\to v=u)
$$ 
We can now express finality in labelled calculi by means of the following two geometric rules (where $v=u$ is governed by the rules in \cite[Table 11.7]{NP11}):
$$
\infer[\infrule{ Fin_1,\, v \text{ fresh}}]{\GSD}{w\RR v,Fin (v),\GSD}\qquad
\infer[\infrule Fin_2]{Fin(v), v\RR u,\GSD}{v=u,Fin(v),v\RR u,\GSD}
$$
By applying this geometrisation strategy we can easily obtain a  labelled calculus (with good structural properties) for the {\bf Q}$\l$-extensions of any first-order definable propositional modal logic. This is far more general than any other existing proof-theoretic characterization result for QMLs (see \cite{G18,LR18} for partial translations of these results to internal calculi).
Moreover, given  Proposition \ref{simulate}, we can easily simulate the QMLs with definite descriptions \emph{\`a la} Montague and Kalish or \emph{\`a la} Garson.  All we have to do is to add the following geometric rules for Montague and Kalish's predicate $U$ (rule $U_1$ is dispensable over constant domains):
$$
\infer[\infrule U_1]{w:U(x),\GSD}{x\in w,w:U(x)\GSD}\qquad
\infer[\infrule U_2]{w:U(x),w:U(y),\GSD}{w:x=y,w:U(x),w:U(y),\GSD}
$$
and the following ones for Garson's predicate $U_{\riota xA}$:
$$
\infer[\infrule U_{Ax,1}]{y\in w,w:U_{\riota xA}(y), \GSD}{}\qquad
\infer[\infrule U_{Ax,2}]{w:U_{\riota xA}(z),w:U_{\riota xA}(y), \GSD}{w:z=y,w:U_{\riota xA}(z),w:U_{\riota xA}(y),\GSD}
$$
Hence, we have a labelled version of the Gentzen-style calculi defined in \cite{I18} for Garson's descriptions  and in \cite{I19} for Montague and Kalish's ones. One advantage of the labelled version over the existing Gentzen-style ones is that, whereas the rules of the Getzen-styule calculi  are not apt for proof search, see \cite[Section 4]{I18},   the labelled version allows for proof-search and for the construction of countermodels from a failed proof search. In particular, we bypass the problems noted in \cite[Section 4]{I18} for the rules for identity  because  the language $\L^\l$ is such that  descriptions  cannot occur in identity atoms.

\bibliographystyle{plain}
\bibliography{biblio}
\end{paper}
\end{document}